\documentclass{amsart}

\usepackage{amsmath, amssymb, amsthm}
\usepackage[latin1]{inputenc}
\usepackage{geometry}
\usepackage[normalem]{ulem}
\usepackage{cancel}
\usepackage[bookmarks=false,colorlinks=true,linkcolor=black,citecolor=black,filecolor=black,urlcolor=black]{hyperref}


\newtheorem{theorem}{Theorem}[section]
\newtheorem{lemma}[theorem]{Lemma}
\newtheorem{proposition}[theorem]{Proposition}

\newtheorem{corollary}[theorem]{Corollary}
\newtheorem{remark}[theorem]{Remark}

\newenvironment{proofof}{\par\noindent{\bf Proof of\;}}{\qed\par\bigskip}




\newcommand{\Cen}{\mbox{\rm C}}

\newcommand{\Aut}{\mbox{\rm Aut}}

\newcommand{\inv}{^{-1}}


\newcommand{\sgn}{{\rm sgn}}

\newcommand{\FC}{{\mathcal{F}}}
\newcommand{\qand}{\quad \text{and} \quad}
\DeclareMathOperator{\Supp}{\text{Supp}}

\newcommand{\matriz}[1]{\begin{array} #1 \end{array}}
\newcommand{\pmatriz}[1]{\left(\begin{array} #1 \end{array}\right)}
\newcommand{\GEN}[1]{\langle #1 \rangle}

\newcommand{\B}{{\mathcal B}}
\newcommand{\U}{\mathcal{U}}

\newcommand{\Z}{\mathcal{Z}}
\newcommand{\PP}{\mathcal{P}}

\date{}
\title[Group algebras whose units satisfy a Laurent Polynomial Identity]{Group algebras whose units satisfy a \\ Laurent Polynomial Identity}

\thanks{The first author has been partially supported by FAPEMIG(Proc. n$^0$ ETC 00163/15) and CAPES(Proc. n$^0$ BEX 4147/13-8) of Brazil.
The second author has been partially supported by Spanish Government under Grant MTM2016-77445-P with "Fondos FEDER" and, by Fundación Séneca of Murcia under Grant 19880/GERM/15.}

\author{Osnel Broche}
\address{Osnel Broche, Departamento de Ciências Exatas, Universidade Federal de Lavras,
Caixa Postal 3037, 37200-000, Lavras, Brazil }
\email{osnel@dex.ufla.br}

\author{Jairo Z. Gonçalves}
\address{Jairo Z. Gonçalves, Department of Mathematics, University of São Paulo, 05508-090, Brazil} \email{jz.goncalves@usp.br}

\author{Ángel del Río}
\address{Ángel del Río, Departamento de Matemáticas, Universidad de Murcia,
30100, Murcia, Spain}
\email{adelrio@um.es}

\subjclass{Primary 16S34, 16R50; Secondary 16R99}

\keywords{Group Rings, Polynomial Identities, Laurent Identities}

\begin{document}
\begin{abstract}
Let $KG$ be the group algebra of a torsion group $G$ over a field $K$. We show that if the units of $KG$ satisfy a Laurent polynomial identity which is not satisfied by the units of the relative free algebra $K[\alpha,\beta : \alpha^2=\beta^2=0]$ then $KG$ satisfies a polynomial identity. This extends Hartley Conjecture which states that if the units of $KG$ satisfies a group identity then $KG$ satisfies a polynomial identity. As an application of our results we prove that if the units of $KG$ satisfies a Laurent polynomial identity with a support of cardinality at most 3 then $KG$ satisfies a polynomial identity.
\end{abstract}

\maketitle

\section{Introduction}

Let $K$ be a field.
Let $A$ be a $K$-algebra and let $\U(A)$ denote the group of units of $A$.
Let $K\GEN{X_1,X_2,\dots}$ denote the $K$-algebra of polynomials with coefficients in $K$ in countably many non-commuting variables.
If $f\in K\GEN{X_1,X_2,\dots}$ and $a_1,a_2,\dots \in A$ then the result of evaluating $f$ with $X_i=a_i$ for every $i$, is denoted $f(a_1,a_2,\dots)$.
A polynomial identity (PI, for short) of $A$ is an element of $K\GEN{X_1,X_2,\dots}$ such that  $f(a_1,a_2,\dots)=0$ for every $a_1,a_2,\dots\in A$.
The algebra $A$ is PI if some non-zero polynomial $f\in K\GEN{X_1,X_2,\dots}$ is a PI for $A$.

Let $K\GEN{X_1^{\pm 1},X_2^{\pm 1},\dots}$ be the $K$-algebra of Laurent polynomials with coefficients in $K$ in countably many non-commuting variables.
If $f\in K\GEN{X_1^{\pm 1},X_2^{\pm 1},\dots}$ and $u_1,u_2,\dots \in \U(A)$ then the evaluation $f(u_1,u_2,,\dots)$ of $f$ with $X_i=u_i$ makes sense.
A \emph{Laurent polynomial identity} (LPI, for short) of $\U(A)$ is an element $f\in K\GEN{X_1^{\pm 1},X_2^{\pm 1},\dots}$ such that $f(u_1,u_2,\dots)=0$ for every $u_1,u_2,\dots\in \U(A)$.
We say that $\U(A)$ \emph{satisfies a Laurent polynomial identity}  (LPI) if some non-zero Laurent polynomial is a LPI of $\U(A)$.

If $G$ is a group then $KG$ denotes the group algebra of $G$ with coefficients in $K$.
This notation is compatible with the one used for the algebra of Laurent polynomials because $\GEN{X_1^{\pm 1},X_2^{\pm 1},\dots}$ is a free group in countably many variables and the algebra $K\GEN{X_1^{\pm 1},X_2^{\pm 1},\dots}$ of Laurent polynomials coincides with the group algebra of $\GEN{X_1^{\pm 1},X_2^{\pm 1},\dots}$ with coefficients in $K$.
As every free group of rank $n$ is contained in the free group of rank $2$, $\U(A)$ satisfies a LPI if and only if $\U(A)$ satisfies a LPI in $2$ variables.

Group algebras satisfying a polynomial identity where characterized by Isaacs and Passman \cite{IsaacsPassman1964,Passman1972}.
Brian Hartley conjectured in the 1980's that if $G$ is periodic and $\U(KG)$ satisfies a group identity then $KG$ satisfies a polynomial identity.
This conjecture was studied by many authors who obtained several partial results (see e.g.
\cite{Warhurst1981,GoncalvesMandel1991,GiambrunoJespersValenti1994}).
Giambruno, Sehgal and Valenti proved Hartley Conjecture under the assumption that $K$ is infinite \cite{GiambrunoSehgalValenti1997}.
Hartley Conjecture was finally proved by Liu in \cite{Liu1999}.
The converse of Hartley Conjecture is not true. For example, if $G$ is finite then $KG$ satisfies obviously a polynomial identity but in most cases $\U(KG)$
contains a free group. For example, this is the case if $K$ has zero characteristic and $G$ is neither abelian nor a Hamiltonian $2$-group
\cite{MarciniakSehgal1997}.

Recall that if $a=\sum_{g\in G} a_g g$ is an element of a group algebra $KG$ with $a_g\in K$ for each $g\in G$ then the \emph{support} of $a$ is $\Supp(a)=\{g\in G : a_g\ne 0\}$. In particular, the support of a Laurent polynomial $f$ is the set of monomials with non-zero coefficient in $f$.
Observe that a group identity is a special case of a LPI, namely a LPI with two elements in the support.
This suggest to ask whether it is sufficient that $\U(KG)$ satisfies a LPI for $KG$ to satisfy a PI.
We prove that this is correct provided that the LPI is not an identity for the units of the following algebra:
	$$\FC = K[\alpha,\beta : \alpha^2=\beta^2=0].$$
The algebra $\FC$ is the free algebra in two non-commutative variables $\alpha$ and $\beta$ relative to the relations $\alpha^2=\beta^2=0$.
Formally we prove the following

\begin{theorem}\label{thetheorem}
Let $K$ be a field, let $G$ be a torsion group, let $KG$ be the group algebra of $G$ with coefficients in $K$
and let $\FC=K[\alpha,\beta:\alpha^2=\beta^2=0]$.
If $\U(KG)$ satisfies a LPI which is not a LPI of $\U(\FC)$ then $KG$ satisfies a PI.
\end{theorem}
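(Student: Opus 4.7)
The plan is to argue by contrapositive: I would assume that $KG$ is not PI and show that every LPI of $\U(KG)$ is also an LPI of $\U(\FC)$. The whole argument reduces to an embedding statement. Indeed, if $\iota\colon \FC\hookrightarrow KG$ is a unital $K$-algebra monomorphism, then $\iota(\U(\FC))\subseteq \U(KG)$ (a unital ring homomorphism sends units to units and inverses to inverses); hence for any LPI $f$ of $\U(KG)$ and any $u_1,u_2\in \U(\FC)$, one has $\iota(f(u_1,u_2))=f(\iota(u_1),\iota(u_2))=0$, and injectivity of $\iota$ forces $f(u_1,u_2)=0$. So the theorem reduces to the key claim: \emph{if $G$ is torsion and $KG$ is not PI, then $\FC$ embeds in $KG$ as a unital $K$-subalgebra}. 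Concretely, one must exhibit $a,b\in KG$ with $a^2=b^2=0$ such that the alternating words $1,a,b,ab,ba,aba,bab,\ldots$ are $K$-linearly independent in $KG$.

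To construct such a ``free square-zero pair'' $(a,b)$ I would exploit the structural consequences of $KG$ being non-PI. By the Isaacs--Passman characterization of PI group algebras, combined with Liu's theorem resolving Hartley's conjecture, the hypothesis forces rich non-abelian torsion behaviour of $G$ (notably the failure of the standard $p$-abelian-by-finite conditions in characteristic $p$). Natural candidates for $a$ and $b$ come from bicyclic-type elements: if $g\in G$ has finite order $n$ and $h\in G$ does not normalise $\GEN{g}$, then $(1-g)h\widehat{g}$, with $\widehat{g}=1+g+\cdots+g^{n-1}$, squares to zero because $\widehat{g}(1-g)=0$. Choosing two such elements drawn from sufficiently independent portions of $G$ and expanding in the group basis, one would verify that no nontrivial alternating word in $a$ and $b$ vanishes. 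A case analysis distinguishing the characteristic of $K$ and the orders of torsion elements available in $G$ should identify a bicyclic pair that works in each regime.

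The main obstacle, and likely the bulk of the proof, is ruling out every possible linear dependence among the alternating words in $a$ and $b$: knowing only that $a$ and $b$ are nonzero and square to zero is not enough, since any unexpected cancellation at a longer alternating word would collapse the subalgebra to a proper quotient of $\FC$. I expect the control to come from an inductive, basis-level argument in $KG$, tracking the supports of the alternating words to ensure that new group elements appear at each length and never cancel against shorter words. Once the embedding $\FC\hookrightarrow KG$ is secured, the theorem follows immediately from the restriction principle of the first paragraph.
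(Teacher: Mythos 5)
Your reduction to the claim ``if $G$ is torsion and $KG$ is not PI then $\FC$ embeds in $KG$ as a unital $K$-subalgebra'' is fatally flawed: that claim is false. Take $K=\Q$ and $G=\bigoplus_{n\ge 1}S_3$. Then $G$ is a locally finite torsion group with no abelian subgroup of finite index, so $KG$ is not PI by the Isaacs--Passman characterization. But $G$ locally finite forces $KG$ to be locally finite-dimensional: any two elements $a,b\in KG$ lie in $KH$ for $H$ the finite subgroup generated by their supports, so the subalgebra they generate is finite-dimensional. Since $\FC$ is an infinite-dimensional algebra generated by two elements (the alternating words in $\alpha,\beta$ are linearly independent), it cannot embed in $KG$. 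No choice of bicyclic-type elements, however clever, can repair this; the obstruction is structural. This is precisely why the torsion hypothesis makes the theorem hard rather than easy: for torsion $G$ the group algebra tends to be too ``locally small'' to contain free objects, and the failure of PI must be detected asymptotically, not inside a single two-generated subalgebra.

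The paper's argument avoids this trap by never asking for an embedding. It uses only the universal property of $\FC$: for any $a,b\in KG$ with $a^2=b^2=0$ there is a (generally non-injective) homomorphism $\FC\to KG$ sending $\alpha\mapsto a$, $\beta\mapsto b$. If $f$ is an LPI of $\U(KG)$ that fails on some units of $\FC$, one extracts from the nonvanishing value a nonzero one-variable polynomial $g$ with $g(\alpha\beta)=\sigma f(u_1,\dots,u_n)\tau$, and pushing this forward yields $g(ab)=0$ for all square-zero $a,b\in KG$ (Lemma~\ref{Restriction}, which also needs Lemma~\ref{thekey} to arrange that the value survives multiplication by $\sigma$ and $\tau$). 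The genuine content of the theorem is then showing that this property $\PP_1$, for a torsion group, forces $KG$ to be PI; that occupies Proposition~\ref{Gp'grouplocfin} and all of Section~\ref{SectionProofOfTheTheorem} (Passman's $\Delta$-methods, the nilpotent radical, and Liu's techniques). Your proposal collapses this entire body of work into an embedding statement that does not hold, so the gap is not a missing detail but a wrong approach.
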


Theorem~\ref{thetheorem} generalizes Hartley Conjecture (i.e. the main result of \cite{Liu1999}) because $\U(\FC)$ contains a free group
and consequently $\U(\FC)$ does not satisfy any non-trivial group identity
(see \cite[Theorem~1.1]{GoncalvesdelRio2011} and  \cite{GoncalvesPassman1996}).
Actually using Theorem~\ref{thetheorem} and some properties of the LPIs of $\FC$ we will prove the following theorem:

\begin{theorem}\label{LPI3Support}
If $K$ is a field and $G$ is a torsion group such that $\U(KG)$ satisfies a LPI whose support has at most three elements then $KG$ satisfies a PI.
\end{theorem}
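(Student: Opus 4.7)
The plan is to combine Theorem~\ref{thetheorem} with the following structural lemma about $\U(\FC)$, whose proof relies only on internal properties of $\FC$:

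\smallskip
\emph{Key Lemma.} Every nonzero Laurent polynomial $f \in K\GEN{X^{\pm 1}, Y^{\pm 1}}$ with $|\Supp(f)| \leq 3$ fails to be an LPI of $\U(\FC)$.
\smallskip

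Granting the Key Lemma, Theorem~\ref{LPI3Support} follows at once: any nonzero LPI of $\U(KG)$ with support of size at most $3$ cannot be an LPI of $\U(\FC)$, so Theorem~\ref{thetheorem} forces $KG$ to satisfy a PI.

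I would prove the Key Lemma by case analysis on $s := |\Supp(f)|$. If $s = 1$, then $f = cw$ with $c \neq 0$, and $f(u,v) = c\, w(u,v)$ is a nonzero scalar multiple of a unit of $\FC$, hence nonzero. If $s = 2$, then $f = c_1 w_1 + c_2 w_2$ with $w_1 \neq w_2$ and $c_i \neq 0$; evaluating at $u = v = 1$ yields $c_1 + c_2 = 0$, so $f$ being an LPI becomes equivalent to the group identity $w_1 = w_2$ in $\U(\FC)$. Since $\U(\FC)$ contains a non-abelian free group by \cite{GoncalvesdelRio2011, GoncalvesPassman1996}, no nontrivial group identity holds, forcing $w_1 = w_2$ and contradicting $s = 2$.

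The main obstacle is the case $s = 3$. After right-multiplying by a word and rescaling, one may normalize $f = a w + b w' + 1$ with $a, b \in K \setminus \{0\}$ and $w, w'$ distinct non-identity elements in the free group on $X, Y$. The plan is to substitute parameterized families of units into the equation $a\, w(u,v) + b\, w'(u,v) + 1 = 0$ and equate coefficients. The primary families are $u_\lambda = 1 + \lambda \alpha$ and $v_\mu = 1 + \mu \beta$: since $\alpha^2 = \beta^2 = 0$, the identities $(1 + \lambda \alpha)^k = 1 + k\lambda\alpha$ and $(1 + \mu\beta)^k = 1 + k\mu\beta$ make the evaluation of each word on these units an explicit polynomial in $\lambda, \mu$ with coefficients in $\FC$. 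Equating coefficients (after extending $K$ to an infinite field if necessary) yields a system of constraints on $a, b$ and on the $X$- and $Y$-exponent sums of $w$ and $w'$. Additional substitutions involving units of the form $1 + \gamma$ with $\gamma^2 = 0$ (for instance $\gamma = \alpha\beta\alpha$ or $\gamma = \beta\alpha\beta$), together with their products, distinguish words sharing the same exponent sums. A case analysis across the resulting possibilities produces a contradiction with $a, b \neq 0$ and $w \neq w'$ in every instance. The principal difficulty will be organizing this case analysis compactly and selecting substitutions sharp enough to exclude every candidate three-term LPI.
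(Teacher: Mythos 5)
Your top-level reduction is exactly the paper's: Theorem~\ref{LPI3Support} follows from Theorem~\ref{thetheorem} once one knows that no nonzero Laurent polynomial with support of size at most $3$ is an LPI of $\U(\FC)$ (this is Proposition~\ref{Support4}), and your cases $s=1$ and $s=2$ are handled correctly and as in the paper. The gap is the case $s=3$, which is the entire content of the paper's Section~\ref{SectionExamples}: what you give there is a plan, not a proof, and as stated it has a concrete defect. Every unit you propose to substitute is a single factor $1+\lambda\gamma$ with $\gamma^2=0$, and for such elements $(1+\lambda\gamma)^k=1+k\lambda\gamma$; hence in characteristic $p$ these evaluations see the exponents $n_i,m_j$ only modulo $p$ (e.g.\ $X^p$ evaluates to $1$ under every such substitution), so they can never separate $X^n$ from $X^{n+p}$. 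More fundamentally, each substitution family yields only \emph{necessary} conditions on $a,b,w,w'$, and the claim that the accumulated conditions are jointly contradictory for \emph{every} three-term candidate is precisely the statement to be proved; you defer it to an unspecified case analysis whose termination and exhaustiveness are not established.

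For comparison, the paper escapes the mod-$p$ collapse by embedding $\FC$ into $M_2(K[T])$ via $\alpha\mapsto e_{12}$, $\beta\mapsto Te_{21}$ and evaluating at the \emph{products} of elementary units $u=(1+\alpha\beta\alpha)(1+\beta)$ and $v=(1+\alpha\beta\alpha)(1+(1-\alpha)\beta(1+\alpha))$, whose images are quadratic in $T$; powers of a word then have genuinely growing $T$-degree. An induction on the ``cumulus'' of a word computes the leading term of its image (Proposition~\ref{ThExamples}), giving Corollary~\ref{onemaxcum}: a Laurent polynomial whose support has a unique element of maximal cumulus is not an LPI of $\U(\FC)$. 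Even with that tool, Proposition~\ref{Support4} requires a further combinatorial argument (multiplying $f$ on either side by $w_1^{-1}$, the unique factorization into cumulus-one pieces, and a minimality induction on the cumulus). Your proposal contains no substitute for any of this, so the theorem is not proved.
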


Some arguments for the proof of Theorem~\ref{thetheorem} uses ideas from \cite{Liu1999}. Essential for adapting these ideas are the following two conditions:
Let $g$ be a polinomial in one variable with coefficients in $K$.
We say that a $K$-algebra $A$ \emph{has the property  $\PP_1$ with respect to} $g$  if $g(ab)=0$ for every $a,b\in A$, with $a^2=0=b^2$. We say that a Laurent polynomial $f$ over $K$ \emph{has the property ${\PP}$} if every $K$-algebra $A$, for which $f$ is a LPI of $\U(A)$, has the property $\PP_1$ with respect to some non-zero polynomial.

In Section~\ref{SectionTorsionFree} we prove the following two key results:

\begin{lemma}\label{Restriction}
	Let $f$ be a Laurent polynomial over $K$.
	Then the following are equivalent:
	\begin{enumerate}
		\item $f$ is not a LPI of $\U(\FC)$.
		\item There is  a non-zero polynomial $g\in K[T]$ such that every $K$-algebra $B$ for which $f$ is a LPI of $\U(B)$ has the property $\PP_1$ with respect to
		$g$.
	\end{enumerate}
\end{lemma}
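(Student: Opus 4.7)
The plan for $(2)\Rightarrow(1)$ is a quick contradiction: in $\FC$ itself $\alpha^2=\beta^2=0$, so if $f$ were an LPI of $\U(\FC)$, property $\PP_1$ with respect to $g$ would force $g(\alpha\beta)=0$ in $\FC$; but $1,\alpha\beta,(\alpha\beta)^2,\ldots$ are $K$-linearly independent alternating monomials in the canonical basis of $\FC$, so $g=0$, contradicting (2).

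For the main direction $(1)\Rightarrow(2)$, first pick $u_1,\ldots,u_n\in\U(\FC)$ with $h:=f(u_1,\ldots,u_n)\neq 0$ in $\FC$. The key step will be to show that the two-sided ideal $(h)\subseteq\FC$ meets the commutative subring $K[\alpha\beta]$ non-trivially, i.e., to produce a non-zero $g\in K[T]$ with $g(\alpha\beta)\in(h)$. Granting this, given any $K$-algebra $B$ for which $f$ is an LPI of $\U(B)$ and any $a,b\in B$ with $a^2=b^2=0$, the universal property of $\FC$ supplies a homomorphism $\varphi:\FC\to B$ sending $\alpha\mapsto a,\beta\mapsto b$; since inverses transport, each $\varphi(u_i)\in\U(B)$, so $\varphi(h)=f(\varphi(u_1),\ldots,\varphi(u_n))=0$ and hence $\varphi$ annihilates all of $(h)$. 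In particular $g(ab)=\varphi(g(\alpha\beta))=0$, so $B$ has $\PP_1$ with respect to $g$.

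To establish the key step I would work in the standard basis of $\FC$ consisting of the alternating words $\alpha_p=\alpha\beta\alpha\cdots$ and $\beta_p=\beta\alpha\beta\cdots$ of length $p\geq 1$, together with $1$, using the explicit product rules ($\alpha_p\cdot x=0$ when the last letter of $\alpha_p$ equals the first letter of $x$, and concatenation otherwise, with the symmetric rules for $\beta_p$). Writing $h=c_0+\sum_{p\geq 1}(c_{\alpha,p}\alpha_p+c_{\beta,p}\beta_p)$ and computing, one gets
\begin{align*}
\alpha h\beta &= c_0(\alpha\beta)+\sum_{p\geq 2,\;p\text{ even}}c_{\beta,p}(\alpha\beta)^{(p+2)/2},\\
\alpha h\alpha\beta &= \sum_{p\geq 1,\;p\text{ odd}}c_{\beta,p}(\alpha\beta)^{(p+3)/2},\\
\alpha\beta h\alpha\beta &= c_0(\alpha\beta)^2+\sum_{p\geq 2,\;p\text{ even}}c_{\alpha,p}(\alpha\beta)^{(p+4)/2},\\
\alpha\beta h\beta &= \sum_{p\geq 1,\;p\text{ odd}}c_{\alpha,p}(\alpha\beta)^{(p+3)/2}.
\end{align*}
Each product lies in $(h)\cap K[\alpha\beta]$, within each line distinct coefficients of $h$ contribute to distinct powers of $\alpha\beta$ so no cancellation occurs, and across all four lines every coefficient of $h$ appears at least once. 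Since $h\neq 0$, at least one of the four polynomials is non-zero, and this serves as $g$.

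The main obstacle is the bookkeeping in the last step: one needs to verify the multiplication table and check that the four specific products together cover every basis coefficient of $h$ without cancellation. Conceptually, left- and right-multiplication by $\alpha$, $\beta$, and $\alpha\beta$ act as \emph{parity selectors} on the alternating-word basis, picking out, respectively, the four classes of basis elements indexed by (starting letter, parity of length), which is what allows a general non-zero $h$ to be projected onto a non-trivial polynomial in $\alpha\beta$.
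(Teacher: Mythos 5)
Your proof is correct, and while the overall skeleton matches the paper's (evaluate $f$ at units of $\FC$ to get $h\neq 0$, multiply by $\sigma\in\{\alpha,\alpha\beta\}$ and $\tau\in\{\beta,\alpha\beta\}$ to land in $K[\alpha\beta]$, then transport through the universal homomorphism $\FC\to B$), the key step is handled by a genuinely different and in fact cleaner argument. The paper works with the set $L=\{r\in\FC:\alpha(1+\beta)r(1+\alpha)\beta=0\}$, which is a nonzero set (e.g.\ $\alpha\beta\alpha-\alpha\beta\in L$, since the four summands cancel in pairs), and therefore needs a separate lemma showing $\bigcap_{\gamma\in\Aut_K(\FC)}\gamma(L)=0$ --- proved via the $2\times 2$ matrix representation $\alpha\mapsto e_{12}$, $\beta\mapsto Te_{21}$ and explicit conjugations --- in order to replace the $u_i$ by $\gamma(u_i)$ and force $\alpha(1+\beta)h(1+\alpha)\beta\neq 0$. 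You instead observe that the four products $\alpha h\beta$, $\alpha h\alpha\beta$, $\alpha\beta h\beta$, $\alpha\beta h\alpha\beta$ cannot \emph{all} vanish for $h\neq 0$: your bookkeeping in the alternating-word basis is correct (each of the four left/right multipliers selects exactly one of the classes indexed by starting letter and length parity, sends distinct basis words to distinct powers of $\alpha\beta$ within each class, and the constant term survives in the first and third products), and together the four products see every coefficient of $h$. This is a strictly stronger statement than the one the paper extracts from its Lemma~\ref{thekey} (you show the simultaneous annihilator of the four bilinear maps is zero, not merely that the automorphism orbit of $L$ intersects to zero), and it makes the automorphism argument and the matrix representation unnecessary for this lemma. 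The $(2)\Rightarrow(1)$ direction is the same transcendence-of-$\alpha\beta$ observation as in the paper.
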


\begin{proposition}\label{Gp'grouplocfin}
Let $K$ be a field of characteristic $p$ and let $G$ be a locally finite $p'$-group.
If $KG$ has the property $\PP_1$ with respect to some non-zero polynomial then $KG$ satisfies a standard polynomial identity.
\end{proposition}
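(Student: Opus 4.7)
The plan is to reduce to finite subgroups and then take a direct limit: it suffices to show that every $KH$ with $H\leq G$ finite satisfies a standard polynomial identity of degree bounded in terms of $\deg g$ alone. Since $G$ is locally finite and $p'$-torsion, Maschke's theorem makes $KH$ semisimple Artinian with Wedderburn decomposition $KH\cong\bigoplus_i M_{n_i}(D_i)$ for division $K$-algebras $D_i$; and the property $\PP_1$ with respect to $g$ restricts to every summand.

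The heart of the argument consists of two constructions in $M_n(D)$ with $n\geq 2$. First, for $d\in D$ take $a=dE_{12}$ and $b=E_{21}$, so $a^2=b^2=0$ and $ab=dE_{11}$; expanding $g(dE_{11})=g(0)(I-E_{11})+g(d)E_{11}$ and demanding that it vanish forces $g(0)=0$ and $g(d)=0$ for every $d\in D$. In particular $g$ vanishes on the scalars $K\subseteq Z(D)$, so $|K|\leq\deg g$, which forces $K$ to be finite whenever any summand has $n_i\geq 2$. In that case every element of $D$ satisfies $g$, so $D$ is a finite-dimensional division algebra over $K$, hence a finite field with $|D|\leq\deg g$ by Wedderburn's little theorem. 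Second, take block nilpotents $a=\bigl(\begin{smallmatrix}0&I_m\\0&0\end{smallmatrix}\bigr)$ and $b=\bigl(\begin{smallmatrix}0&0\\N&0\end{smallmatrix}\bigr)$ in $M_{2m}(D)$ for $N$ a Jordan block of nilpotency index $m$: then $ab=\diag(N,0)$ is nilpotent of index $m$, and expanding $g(ab)=0$ coefficient-wise gives $X^m\mid g$, hence $n=2m\leq 2\deg g$.

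When $K$ is finite these bounds close the argument: every $D_i$ is a finite field (since $\dim_K D_i<\infty$ as a summand of $KH$, and by Wedderburn's little theorem), so $[D_i:Z(D_i)]=1$ and $KH$ satisfies $s_{4\deg g}$ uniformly; hence so does $KG$. The main obstacle is the case $K$ infinite. Then the previous paragraph rules out all $n_i\geq 2$, and $KG$ is forced to be reduced; but $\PP_1$ is vacuous inside a division algebra with no nonzero nilpotents, so it gives no direct bound on the Schur indices $[D_i:Z(D_i)]$. Here I would combine the reduced structure with Passman's characterization of PI group algebras and the locally finite $p'$-torsion hypothesis to conclude that $G$ has a normal abelian subgroup $A$ of index bounded by a function of $\deg g$; the embedding $KG\hookrightarrow M_{[G:A]}(KA)$ into a matrix ring of bounded size over a commutative ring then yields the desired standard polynomial identity.
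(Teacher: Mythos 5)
Your overall strategy coincides with the paper's: reduce to a finite subgroup $H$ by local finiteness, decompose $KH$ via Maschke and Wedderburn--Artin, and bound the simple components using pairs of square-zero elements. Your two matrix constructions are correct, and the second one (block nilpotents forcing $T^m\mid g$, hence $n_i\le 2\deg g+1$) is a clean alternative to the paper's Lemma~\ref{finitecondi}, which instead embeds $M_2(E)$ for a suitable finite extension $E/K$ via the regular representation to obtain $n_i<2\log_{|K|}(\deg g)+2$; your bound is weaker but needs no hypothesis on $|K|$. You also correctly isolate the one delicate point, which the paper passes over by simply asserting that the components are matrix rings over \emph{finite} fields: when $K$ is infinite the first construction forces every $n_i=1$, so $KG$ is reduced, $\PP_1$ becomes vacuous, and nothing so far bounds the degrees of the division algebras $D_i$ over their centres.

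The gap is in how you close that remaining case. Passman's characterization of PI group algebras ($KG$ is PI if and only if $G$ has a $p$-abelian subgroup of finite index) cannot produce an abelian subgroup of bounded index here without assuming the conclusion: its hypothesis is exactly what you are trying to establish. Moreover, since $\PP_1$ carries no information in a reduced ring, no bound in this case can be ``a function of $\deg g$''; it must come from the structure of reduced group algebras alone. The tool you actually need is the classification of torsion groups with $KG$ reduced. In characteristic $p>0$ the issue does not arise: $\mathbb{F}_pH$ is a product of matrix rings over finite fields, so extending scalars shows every component of $KH$ for $H$ a finite $p'$-group is a matrix ring over a (commutative) field. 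In characteristic $0$, a reduced $KG$ with $G$ torsion forces $G$ to be abelian or Hamiltonian, so $G$ has an abelian subgroup of index $2$ and each $D_i$ has degree at most $2$ over its centre, whence $S_4$ holds. With that substitution your argument closes; as written, the final paragraph gestures at an inapplicable theorem rather than supplying the missing step.
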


This yields at once Theorem~\ref{thetheorem} provided that $G$ is a locally finite $p'$-group.
	
In Section~\ref{SectionProofOfTheTheorem} we prove the following:

\begin{theorem}\label{soe}
Let $KG$ be the group algebra of the torsion group $G$ over the field $K$.
If $KG$ satisfies a LPI which has the property $\PP$ then $KG$ satisfies a PI.
\end{theorem}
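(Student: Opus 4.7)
The plan is to reduce Theorem~\ref{soe} to Proposition~\ref{Gp'grouplocfin} by quotienting out the normal subgroup of $p$-elements. By applying property $\PP$ to $A = KG$, we obtain a non-zero polynomial $g \in K[T]$ such that $KG$ has $\PP_1$ with respect to $g$; that is, $g(ab)=0$ whenever $a,b\in KG$ satisfy $a^2=b^2=0$. Since both being a LPI and having property $\PP$ survive scalar extension, we may enlarge $K$ to its algebraic closure.

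Before splitting into cases, I would establish that $G$ is locally finite. This is a familiar first step in Hartley-type proofs: a LPI on $\U(KG)$ rules out sufficiently free behaviour in $\U(KG)$, and combined with the torsion hypothesis on $G$ this forces local finiteness. Now let $p = \text{char}(K)$. If $p = 0$, then $G$ is vacuously a $p'$-group and Proposition~\ref{Gp'grouplocfin} gives that $KG$ is PI.

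Assume $p > 0$ and let $P$ denote the set of $p$-elements of $G$. Using local finiteness of $G$ together with $\PP_1$ applied to suitable nilpotents of the form $(x-1)^{\lceil p/2 \rceil}$ for $p$-elements $x$, I would show that $P$ is a normal subgroup of $G$ and that $G/P$ is a locally finite $p'$-group. Since $\omega(KP) \cdot KG$ is a nil ideal of $KG$, every unit of $K[G/P]$ lifts to a unit of $KG$, so $f$ remains a LPI of $\U(K[G/P])$. Applying property $\PP$ once more yields $\PP_1$ for $K[G/P]$ with respect to some non-zero polynomial, and Proposition~\ref{Gp'grouplocfin} concludes that $K[G/P]$ is PI.

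To finish, one must lift the PI conclusion from $K[G/P]$ to $KG$. By Passman's characterization of PI group algebras, in characteristic $p$ it suffices to exhibit a $p$-abelian normal subgroup of $G$ of finite index whose derived subgroup has bounded $p$-part. The PI structure on $K[G/P]$ gives most of this on $G/P$, so the remaining task is uniform control of $P$ itself, obtained by further application of $\PP_1$ to products of nilpotents built from $p$-elements and their conjugates. The main obstacle is this last step: since $g(ab)=0$ is considerably weaker than $ab=0$, extracting a uniform exponent bound on $P$ and on the $p$-part of $G'$ requires careful combinatorial analysis of supports in $KG$, leveraging the fact that the single polynomial $g$ annihilates every such product $ab$ simultaneously.
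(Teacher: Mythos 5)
There is a genuine gap --- in fact the proposal leaves open precisely the step that carries the real content of the theorem, and two of its intermediate assertions are not obtainable in the way you suggest. First, the claim that local finiteness of $G$ is a ``familiar first step'' that follows from the LPI plus torsion cannot be waved through: for a general torsion group this is a Burnside-type obstacle, and in the actual proof local finiteness only emerges \emph{inside} the case analysis, either via generalized polynomial identity theory (Lemma~\ref{IdealPI}, which uses \cite[Lemma~2.8]{Liu1999} and \cite[Theorem~5.3.15]{Passman77} to get $[G:\Delta(G)]<\infty$) or via the f.c.\ structure produced by Lemma~\ref{idempotentnilpotent}. Second, the normality of the set $P$ of $p$-elements is likewise not free: in the paper the analogous statement is proved only after one knows that every idempotent of $KG$ is central and that $bac=0$ whenever $a$ is nilpotent and $bc=0$, and those facts in turn require $KG$ to be semiprime and require the reduction supplied by Lemma~\ref{IdealPI} (a right ideal satisfying a PI of degree $k$ has $I^k=0$). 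Your ``$\PP_1$ applied to suitable nilpotents'' only yields $g(ab)=0$, which, as you yourself note, is far weaker than the vanishing statements actually needed.

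Third and most importantly, the final lifting step --- passing from $K[G/P]$ being PI back to $KG$ being PI --- is exactly where you stop, and the paper's route around this obstacle is structurally different from what you propose. The paper does not quotient by all of $P$; it quotients by $\Delta^p(G)$ and devotes the heart of the proof to showing that $\Delta^p(G)$ is \emph{finite}. This is done by contradiction: if $\Delta^p(G)$ is infinite then $N(KG)$ is not nilpotent, so one can choose a nilpotent ideal $I$ with $I^m\neq 0$ and $I^{m+1}=0$ for the integer $m$ extracted from $f$ via the Makar-Limanov substitution $X_i\mapsto 1+X_iT_i$; evaluating $f$ at $1+a_i$ with $a_i\in I$ then shows that the lowest-degree homogeneous component of $\Phi(f)$ is a PI of degree $m$ for $I$, contradicting Lemma~\ref{IdealPI}'s reduction. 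Once $\Delta^p(G)$ is known to be finite, the kernel of $KG\to K(G/\Delta^p(G))$ is nilpotent, units lift, and the PI transfers back through \cite[Corollary~5.3.10]{Passman77} with no need for any ``uniform exponent bound on $P$.'' Without the Makar-Limanov/Yeh--Chuang argument (or a substitute), the combinatorial analysis you defer to at the end has no visible path to completion, so the proposal as written does not prove the theorem.
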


Then the proof of Theorem~\ref{thetheorem}  follows easily.
Indeed, suppose that $\U(KG)$ satisfies a LPI $f$ which is not an identity of $\U(\FC)$.
Then $f$ has the property $\PP$, by Lemma~\ref{Restriction}. Hence $KG$ satisfies a PI, by Theorem~\ref{soe}.

In order to use Theorem~\ref{thetheorem} at a practical level it would be convenient to have a handy criteria to decide whether a Laurent polynomial is an LPI for $\U(\FC)$.
It is clear that the criteria given by Lemma~\ref{Restriction} cannot be considered handy.

In Section~\ref{SectionExamples} we provide some necessary conditions for a Laurent polynomial to be an LPI for $\U(\FC)$, whose verification for a particular Laurent polynomial is straightforward (Proposition~\ref{ThExamples}).
As an application we prove the following:
\begin{proposition}\label{Support4}
Every non-zero LPI of $\U(\FC)$ has a support with at least four elements.
\end{proposition}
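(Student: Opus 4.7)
The plan is to rule out support sizes $1$, $2$, and $3$ separately. For size $1$, if $f = cM$ is a non-zero Laurent monomial then evaluating at $X_i = 1$ gives $c = 0$, a contradiction. For size $2$, write $f = c_1 M_1 + c_2 M_2$ with $c_1, c_2 \neq 0$ and $M_1 \ne M_2$; since right-multiplication by a Laurent monomial preserves the property of being an LPI, replace $f$ by $f \cdot M_2^{-1} = c_1 N + c_2$ with $N := M_1 M_2^{-1} \ne 1$. Evaluating at the identity forces $c_1 + c_2 = 0$, so $f = c_1(N - 1)$; hence $N$ would be a group identity of $\U(\FC)$. But $\U(\FC)$ contains a non-abelian free group (\cite{GoncalvesdelRio2011}, \cite{GoncalvesPassman1996}), so no non-trivial reduced word is a group identity, a contradiction.

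The essential case is support size $3$. Write $f = c_1 M_1 + c_2 M_2 + c_3 M_3$ with all $c_i \ne 0$ and pairwise distinct $M_i$. Right-multiplying by $M_3^{-1}$, and using the reduction to two variables recalled in the Introduction, we reduce to $f = c_1 N_1 + c_2 N_2 + c_3$ with $N_1, N_2$ distinct non-identity elements of $\GEN{X_1^{\pm 1}, X_2^{\pm 1}}$. I then invoke the necessary conditions from Proposition~\ref{ThExamples}, obtained by evaluating $f$ at explicit units of $\FC$. The natural test units are $u_1 = 1 + \alpha$ and $u_2 = 1 + \beta$, whose inverses are $1 - \alpha$ and $1 - \beta$ because $\alpha^2 = \beta^2 = 0$; thus for a reduced word $N = X_{i_1}^{e_1} \cdots X_{i_k}^{e_k}$ one has $N(u_1, u_2) = \prod_j (1 + e_j \alpha_{i_j})$ with $\alpha_1 = \alpha$ and $\alpha_2 = \beta$. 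Expanding in the $K$-basis of $\FC$ of alternating words in $\alpha, \beta$, the equation $f(u_1, u_2) = 0$ decomposes into scalar conditions on $(c_1, c_2, c_3)$: the constant term gives $c_1 + c_2 + c_3 = 0$; the degree-one part gives $c_1 \sigma_j(N_1) + c_2 \sigma_j(N_2) = 0$ for $j = 1, 2$, where $\sigma_j$ is the abelianization in $X_j$; and higher-degree parts encode how $N_1$ and $N_2$ differ as reduced words.

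The main obstacle is the degenerate subcase where the abelianizations of $N_1$ and $N_2$ are proportional, in particular where both are trivial so that $N_1, N_2$ lie in the commutator subgroup of the free group; here the first-order conditions collapse and do not distinguish $N_1$ from $N_2$. One must then extract enough information from higher-degree coefficients in the alternating-word expansion, for instance by rescaling $u_i = 1 + t \alpha_i$ and reading off coefficients in $t$ (passing to an infinite extension of $K$ if necessary) to obtain additional linear constraints. Proposition~\ref{ThExamples} is expected to organize these higher-degree necessary conditions rigidly enough that no pair of distinct non-trivial $N_1, N_2$ together with non-zero $(c_1, c_2, c_3)$ satisfying $c_1 + c_2 + c_3 = 0$ can meet all of them simultaneously, forcing $f = 0$ and completing the proof.
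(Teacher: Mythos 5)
Your treatment of support sizes $1$ and $2$ is correct and is essentially what the paper does (a non-trivial monomial cannot be a group identity because $\U(\FC)$ contains a non-abelian free group). The size-$3$ case, however, is not proved: your final paragraph explicitly defers the decisive step to the hope that ``Proposition~\ref{ThExamples} is expected to organize these higher-degree necessary conditions rigidly enough,'' and the degenerate subcase you yourself flag --- $N_1,N_2$ with proportional (in particular trivial) abelianizations --- is exactly where all the content lies. Moreover, the test units you choose are too coarse to carry the argument. Since $(1+\alpha)^n=1+n\alpha$, the substitution $X\mapsto 1+\alpha$, $Y\mapsto 1+\beta$ kills every $p$-th power of a generator in characteristic $p$, and even in characteristic $0$ the rescaling $u_i=1+t\alpha_i$ only reproduces the same evaluation with $\alpha,\beta$ replaced by $t\alpha,t\beta$, so it yields no new linear constraints beyond sorting the expansion by word length. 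Nothing in your sketch shows that the surviving higher-degree coefficients separate two distinct words of the commutator subgroup, and that is precisely the hard part.

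You have also misread what Proposition~\ref{ThExamples} provides. It does not arise from evaluating at $1+\alpha$ and $1+\beta$; it evaluates at the specific units $u=(1+\alpha\beta\alpha)(1+\beta)$ and $v=(1+\alpha\beta\alpha)(1+(1-\alpha)\beta(1+\alpha))$ of display \eqref{uv}, pushes the result into $M_2(K[T])$ via the embedding $\varphi$, and extracts the coefficient of $T^{2C(f)}$, where $C(f)$ is the maximal \emph{cumulus} of a monomial in the support. Its operative consequence is Corollary~\ref{onemaxcum}: an LPI of $\U(\FC)$ cannot have a unique support element of maximal cumulus. The paper's actual proof of Proposition~\ref{Support4} then normalizes $f=1+\alpha_1w_1+\alpha_2w_2$ with $C(w_1)=C(w_2)=c$ minimal, applies Corollary~\ref{onemaxcum} to $w_1^{-1}f$ and $fw_1^{-1}$ to force $C(w_1^{-1}w_2)=C(w_2w_1^{-1})=c$, and exploits the unique factorization of words into cumulus-one pieces (Lemma~\ref{CumulusSignPropiedades}\eqref{CumulusDescomposicion}) to manufacture a new three-term LPI of strictly smaller cumulus, contradicting minimality. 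This combinatorial descent on the cumulus is the missing idea in your proposal; without it, or some equally concrete mechanism for separating $N_1$ from $N_2$ in the degenerate case, the argument does not close.
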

Theorem~\ref{LPI3Support} follows directly from Theorem~\ref{thetheorem} and Proposition~\ref{Support4}.

\section{The locally finite $p'$-case}\label{SectionTorsionFree}

In this section we prove Lemma~\ref{Restriction} and Proposition~\ref{Gp'grouplocfin}. As explained in the introduction this proves Theorem~\ref{thetheorem} under the additional assumption that $G$ is a locally finite $p'$-group, where $p$ is the characteristic of $K$, and in particular, in case $K$ has characteristic 0 and $G$ is locally finite.

For the proof of Lemma~\ref{Restriction} we need the following:

\begin{lemma}\label{thekey}
If $L=\{r\in \FC : \alpha(1+\beta)r(1+\alpha)\beta=0\}$ and $\Aut_K(\FC)$ denotes the group of $K$-automorphisms of $\FC$ then
  $$\bigcap_{\gamma\in \Aut_K(\FC)} \gamma(L)=0.$$
\end{lemma}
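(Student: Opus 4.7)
The plan is to show that for each non-zero $r \in \FC$ there is some $\gamma \in \Aut_K(\FC)$ with $r \notin \gamma(L)$. A $K$-algebra endomorphism of $\FC$ is determined by the images $a = \gamma(\alpha)$, $b = \gamma(\beta)$, and these need only satisfy $a^2 = b^2 = 0$. Applying $\gamma$ to the defining condition of $L$, membership $r \in \gamma(L)$ is equivalent to
\[
a(1+b)\, r\, (1+a)\, b \;=\; arb + arab + abrb + abrab \;=\; 0.
\]

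The engine of the proof is the family of diagonal scaling automorphisms $\gamma_{\lambda,\mu}\colon \alpha \mapsto \lambda\alpha,\ \beta \mapsto \mu\beta$ for $\lambda,\mu \in K^{\times}$. Substituting $a = \lambda\alpha$, $b = \mu\beta$ and dividing by $\lambda\mu$, the condition $r \in \gamma_{\lambda,\mu}(L)$ reads
\[
\alpha r\beta + \lambda\, \alpha r\alpha\beta + \mu\, \alpha\beta r\beta + \lambda\mu\, \alpha\beta r\alpha\beta \;=\; 0, \qquad (\lambda,\mu \in K^{\times}).
\]
Whenever $|K| \ge 3$ a Vandermonde-type argument (two distinct non-zero values of $\lambda$ and two of $\mu$, whose evaluation matrix factors as a tensor product of $2\times 2$ Vandermondes) extracts the four separate identities
\[
\alpha r\beta \;=\; \alpha r\alpha\beta \;=\; \alpha\beta r\beta \;=\; \alpha\beta r\alpha\beta \;=\; 0. \qquad (\ast)
\]

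To derive $r = 0$ from $(\ast)$, expand $r = \sum_w c_w w$ in the canonical basis of $\FC$ formed by the alternating words $1,\alpha,\beta,\alpha\beta,\beta\alpha,\alpha\beta\alpha,\ldots$. A direct case analysis on the first and last letter of $w$ shows: $\alpha w\beta \neq 0$ iff $w = 1$ or $w$ starts with $\beta$ and ends with $\alpha$; $\alpha\beta w\beta \neq 0$ iff $w$ starts and ends with $\alpha$; $\alpha w\alpha\beta \neq 0$ iff $w$ starts and ends with $\beta$; and $\alpha\beta w\alpha\beta \neq 0$ iff $w = 1$ or $w$ starts with $\alpha$ and ends with $\beta$. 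The resulting non-zero monomials are themselves distinct alternating words, so each identity in $(\ast)$ forces the vanishing of an entire family of coefficients $c_w$; as these four families jointly cover every basis word, we conclude $r = 0$.

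The only delicate point is the corner case $K = \mathbb{F}_2$, where the scaling trick collapses. I would supplement the scalings with the swap $\sigma\colon \alpha \leftrightarrow \beta$, the inner automorphisms $\Inn_{1+\alpha}$ and $\Inn_{1+\beta}$, and more generally the ``triangular'' automorphisms $\alpha \mapsto \alpha + \alpha w\alpha$, $\beta \mapsto \beta$ for $w \in \FC$, which satisfy $a^2 = 0$ automatically and are automorphisms by a standard filtered-algebra argument. Each of these produces a fresh linear identity on the $c_w$'s, and a careful combinatorial assembly again forces $r = 0$. This $\mathbb{F}_2$ case is the main technical obstacle; elsewhere the scaling-plus-basis argument is essentially mechanical.
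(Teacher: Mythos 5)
Your argument for $|K|\ge 3$ is correct and takes a genuinely different route from the paper. The paper embeds $\FC$ faithfully into $M_2(K[T])$ via $\alpha\mapsto e_{12}$, $\beta\mapsto Te_{21}$, characterizes $\varphi(L)$ by linear conditions on the matrix entries, and extracts further conditions by conjugating with the unipotent units $1\pm\alpha$ and $1+\beta$. You instead use the diagonal scalings $\gamma_{\lambda,\mu}$ and a Vandermonde (tensor-product) decoupling of the four terms $\alpha r\beta$, $\alpha r\alpha\beta$, $\alpha\beta r\beta$, $\alpha\beta r\alpha\beta$, then finish by a first/last-letter analysis on the monomial basis of $\FC$. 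I checked the details: the four families of alternating words you describe do cover the whole basis, and within each identity the nonzero products are pairwise distinct basis elements, so all coefficients of $r$ vanish. This is arguably cleaner than the paper's computation and avoids the matrix model entirely; its price is that it needs $K^\times$ to contain two distinct elements.

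That price is exactly where the gap is: the case $K=\mathbb{F}_2$ is not proved. There the scaling group is trivial and your mechanism yields only the single relation $\alpha r\beta+\alpha r\alpha\beta+\alpha\beta r\beta+\alpha\beta r\alpha\beta=0$. The supplementary automorphisms you name (the swap, conjugation by $1+\alpha$ and $1+\beta$, the triangular maps $\alpha\mapsto\alpha+\alpha w\alpha$) are reasonable candidates, but ``a careful combinatorial assembly again forces $r=0$'' is a promise, not an argument; it is not evident that the resulting system of relations separates all coefficients, and no verification is offered. This case cannot be dismissed as a corner: the lemma is invoked for arbitrary fields, and small finite fields are precisely the ones that matter downstream (Lemma~\ref{finitecondi} forces $|K|\le \deg(g)$, so the property $\PP_1$ is exploited exactly when $K$ is a small finite field). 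Until the $\mathbb{F}_2$ case is written out in full, the proof is incomplete. For comparison, the paper's use of conjugation rather than scaling does not degenerate when $K^\times$ is trivial, though you may notice that in characteristic $2$ two of its three displayed linear equations coincide (since $1+a=1-a$), so even there the small-characteristic situation deserves explicit care.
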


\begin{proof}
Consider the homomorphism $\varphi:\FC\mapsto M_2(K[T])$ with
  $$a=\varphi(\alpha)=\pmatriz{{cc} 0 & 1\\ 0 & 0} \quad \text{and} \quad b=\varphi(\beta)=\pmatriz{{cc} 0 & 0 \\ T & 0}.$$
It is easy to see that $\varphi$ is injective and
  $$\varphi(\FC)=\left\{\pmatriz{{cc} x+T A & B \\ T C & x+TD} :  x\in K; A,B,C,D\in K[T]\right\}.$$
Fix a generic element $s=\pmatriz{{cc} x+T A & B \\ T C & x+TD}$ of $\varphi(\FC)$.
Then
  $$a(1+b)s(1+a)b 
  =T\pmatriz{{cc} x+T(x+T A+B+C+D) &  0 \\0 & 0}.$$
Therefore $s\in \varphi(L)$ if and only if $x=0$ and $T A+B+C+D=0$.

Suppose that $r\in \bigcap_{\gamma\in \Aut_K(\FC)} \gamma(L)$ and that $s=\varphi(r)$ is as above. Then all the conjugates of $s$ in $\varphi(\FC)$ belong to $\varphi(L)$.
In particular,
$$(1+a)s(1-a)=\pmatriz{{cc} T(A+C) & B+T(D-A-C) \\ T C & T(D-C)}\in \varphi(L)$$
and
$$(1-a)s(1+a)=\pmatriz{{cc} T(A-C) & B+T(A-D-C) \\ T C & T(D+C)}\in \varphi(L).$$
Therefore
\begin{eqnarray*}
T A+B+C+D & = & 0 \\
B+(T+1)D &=& 0 \\
2TA+B+2(1-T)C+(1-T)D&=&0
\end{eqnarray*}
Solving this equation we obtain $C=0$, $A=D$ and $B=-(1+T)A$.
Therefore
	$$s=\pmatriz{{cc} TA & -(1+T)A \\ 0 & TA}.$$
Conjugating with $1+b$ we obtain
$$(1+b)s(1-b)
=\pmatriz{{cc} (T^2+2T)A & -(1+T)A \\ (T^2+T^3)A &-T^2A}\in \varphi(L)$$
Therefore  $(2T^2+T-1)A=0$ so that $A=0$ and $s=0$. Since $\varphi$ is injective, $r=0$, as desired.
\end{proof}


\begin{proofof}\textbf{Lemma~\ref{Restriction}}.
(2) implies (1)
Suppose that $f$ is a LPI of $\U(\FC)$. As $\alpha\beta$ is transcendental over $K$, $\FC$ does not have the property
$\PP_1$ with respect to any non-zero polynomial. Hence $f$ does not satisfy condition (2).

(1) implies (2)
Suppose that $f$ is not a LPI of $\U(\FC)$ and that the number of variables of $f$ is $n$.
Let $u_1,\dots,u_n\in \U(\FC)$ such that $f(u_1,\dots,u_n)\ne 0$.
Let $L$ be as in Lemma~\ref{thekey}.
Then $f(\gamma(u_1),\dots,\gamma(u_n))=\gamma(f(u_1,\dots,u_n))\not\in L$ for some $K$-automorphism $\gamma$ of $\FC$ and therefore we
may assume without loss of generality that $r=f(u_1,\dots,u_n)\not\in L$. Therefore $\alpha(1+\beta)r(1+\alpha)\beta\ne 0$.
Thus there are $\sigma \in \{\alpha,\alpha\beta\}$ and $\tau \in \{\beta,\alpha\beta\}$ such that $\sigma r\tau\ne 0$.
Observe that $\sigma r \tau=g(\alpha\beta)$ for some $g\in K[T]$ and necessarily $g\ne 0$.

Let $B$ be a $K$-algebra such that $f$ is a LPI of $\U(B)$ and let $a,b\in B$ with $a^2=b^2=0$. Then there is a homomorphism of
$K$-algebras $\varphi:\FC\rightarrow B$ such that $\varphi(\alpha)=a$ and $\varphi(\beta)=b$.
As $\varphi(u_1),\dots,\varphi(u_n)$ are units of $B$, we have $g(ab)=g(\varphi(\alpha)\varphi(\beta))=g(\varphi(\alpha\beta))=\varphi(g(\alpha\beta))=\varphi(\sigma r
\tau)=\varphi(\sigma)\varphi(f(u_1,\dots,u_n))\varphi(\tau)=\varphi(\sigma)f(\varphi(u_1),\dots,\varphi(u_n))\varphi(\tau)=0$.
This proves the Lemma.
\end{proofof}

For the proof of Proposition~\ref{Gp'grouplocfin} we need the following:

\begin{lemma}\label{finitecondi}
If $A$ is a $K$-algebra containing $M_n(K)$ with $n\geq 2$ and $A$ has the property $\PP_1$ with respect to the polynomial $g\in K[T]\setminus \{0\}$ then
\begin{enumerate}
\item\label{finiteK} $|K|\leq deg(g)$ and
\item\label{matrixorder} $n < 2\log_{|K|}(deg(g))+2$.
\end{enumerate}
\end{lemma}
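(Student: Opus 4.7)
The plan is to exploit the presence of $M_n(K)$ inside $A$ by producing explicit pairs of square-zero elements whose product is a prescribed matrix, and then to translate the identity $g(ab)=0$ into vanishing conditions on $g$. Writing $g(T)=\sum_{i\ge 0}c_iT^i$, for (\ref{finiteK}) I would take $a=E_{12}$ and $b=tE_{21}$ inside $M_n(K)\subseteq A$ for an arbitrary $t\in K$. Then $a^2=b^2=0$ and $ab=tE_{11}$, so $(ab)^i=t^iE_{11}$ for $i\ge 1$, giving
$$g(ab)=c_0\,I_n+(g(t)-c_0)\,E_{11}.$$
Since $n\ge 2$, the matrices $I_n$ and $E_{11}$ are $K$-linearly independent, so $g(ab)=0$ forces $c_0=0$ and $g(t)=0$. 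Thus every element of $K$ is a root of $g$, and as $g\ne 0$ has at most $\deg(g)$ roots this yields $|K|\le \deg(g)$ (in particular $K$ must be finite).

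For (\ref{matrixorder}), by (\ref{finiteK}) I may assume $K$ is finite, set $q=|K|$ and $m=\lfloor n/2\rfloor\ge 1$, and consider the block construction
$$a=\sum_{i=1}^m E_{i,\,m+i},\qquad b=\sum_{i,j=1}^m d_{ij}\,E_{m+i,\,j},\qquad D=(d_{ij})\in M_m(K).$$
A direct check on products of matrix units gives $a^2=b^2=0$ and $ab=\diag(D,0_{n-m})$, hence $g(ab)=\diag(g(D),\,c_0 I_{n-m})$. Since $n-m\ge 1$, the equation $g(ab)=0$ forces $c_0=0$ and $g(D)=0$; as $D$ was arbitrary, $g$ vanishes identically on $M_m(K)$.

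To convert this into a lower bound for $\deg(g)$, I would use the regular representation of the extension $\mathbb{F}_{q^m}/K$: for each $\lambda\in\mathbb{F}_{q^m}$, multiplication by $\lambda$ is a $K$-linear endomorphism of $\mathbb{F}_{q^m}\cong K^m$, yielding a matrix $M_\lambda\in M_m(K)$ whose minimal polynomial coincides with the minimal polynomial of $\lambda$ over $K$. Thus $g(M_\lambda)=0$ implies $g(\lambda)=0$, and ranging over $\lambda$ shows that $g$ has at least $q^m$ roots, so $\deg(g)\ge q^m$. Taking $\log_q$ gives $m\le \log_{|K|}(\deg(g))$, and hence
$$n\le 2m+1\le 2\log_{|K|}(\deg(g))+1<2\log_{|K|}(\deg(g))+2.$$
The main conceptual step is this last passage, from ``$g$ is a one-variable polynomial identity for $M_m(K)$'' to the exponential bound $\deg(g)\ge q^m$ via the regular representation of $\mathbb{F}_{q^m}$; the remainder is a routine verification that the constructed nilpotent pairs produce the stated products.
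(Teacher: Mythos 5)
Your proof is correct. Part (\ref{finiteK}) is essentially the paper's argument (the paper takes $a=re_{12}$, $b=e_{21}$ and writes $0=g(ab)=g(r)e_{11}$; you are in fact slightly more careful, extracting $c_0=0$ from the linear independence of $I_n$ and $E_{11}$, a point the paper glosses over). For part (\ref{matrixorder}) the essential engine is the same in both proofs, namely the regular representation of a degree-$m$ extension of the finite field $K$ combined with the fact that a non-zero $g$ has at most $\deg(g)$ roots, but the packaging differs. The paper argues by contradiction: it sets $s=[\log_{|K|}(\deg(g))]+1$, takes an extension $E/K$ of degree $s$, embeds $M_2(E)\hookrightarrow M_2(M_s(K))=M_{2s}(K)\subseteq M_n(K)$ via the regular representation, and re-applies part (\ref{finiteK}) over $E$ to obtain $|E|\le \deg(g)$, contradicting $|E|>\deg(g)$; hence $n<2s$. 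You instead work directly: your block construction shows that $g$ vanishes on all of $M_m(K)$ with $m=\lfloor n/2\rfloor$ (a stronger intermediate statement than the paper needs), and only afterwards do you restrict to the image of $\mathbb{F}_{q^m}$ under the regular representation to count roots. The two computations coincide once one observes that the paper's pair $(re_{12},e_{21})$ in $M_2(E)\subseteq M_{2s}(K)$ is exactly your pair $(a,b)$ with $D=M_r$. Your organization buys a marginally sharper conclusion, $n\le 2\log_{|K|}(\deg(g))+1$, while the paper's reduction to part (\ref{finiteK}) is shorter to write; both yield the stated bound.
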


\begin{proof}
Let $e_{ij}$ denote the matrix having $1$ at the $(i,j)$-th entry and zeros at all other entries.
Let $a=r e_{12}$ and $b=e_{21}$, with $r\in K$. Then $a^2=0=b^2$ and $ab=r e_{11}$.
Thus
$0=g(ab)=g(r)e_{11}$ and therefore $r$ is a root of $g$ for all $r\in K$. Hence $|K|\leq deg(g)$.

Now, let $s=[\log_{|K|}(deg(g))] + 1$ and let $E$ be an extension of $K$ of degree $s$.
Then $|K|^{s-1} \leq deg(g) <|K|^s=|E|$.
Moreover $M_2(E)\hookrightarrow M_2(M_s(K))=M_{2s}(K)$, by the regular representation of  $E$ over $K$.
Thus, if $n\geq 2s$ then $M_2(E)\subseteq A$ and, by (\ref{finiteK}),
$|E|\leq deg(g)$, a contradiction.
Hence $n < 2s \le 2\log_{|K|}(deg(g))+2$.
\end{proof}

%


\begin{proofof}\textbf{Proposition~\ref{Gp'grouplocfin}}.
Assume that $KG$ has the property $\PP_1$ with respect to $g\in K[T]\setminus \{0\}$ and let $m=[\log_2(deg(g))]+1$. We will show that $KG$ satisfies the standard
polynomial identity $S_{4m}$ of degree $4m$.
Let $\alpha_1,\cdots,\alpha_{4m}\in KG$, and let $H$ be the subgroup of $G$
generated by all the elements in the union of the supports of the $\alpha_i$'s.
Since $G$ is a locally finite $p'-$group, $H$ is a finite $p'$-group.
By the Maschke Theorem and the Wedderburn-Artin Theorem,
$KH\cong\prod M_{n_i}(F_i)$, where each $F_i$ is a finite field.
By Lemma~\ref{finitecondi}.(\ref{matrixorder}), $n_i< 2\log_{|F_i|}(deg(g))+2\leq 2m$ and  hence, by the Amitsur-Levitzki Theorem, we have  $S_{4m}(\alpha_1,\cdots,\alpha_{4m})=0$, as desired.
\end{proofof}

%

\section{Proof of Theorem~\ref{soe}}\label{SectionProofOfTheTheorem}

In this section we prove Theorem~\ref{soe}. This completes the proof of Theorem~\ref{thetheorem}, as explained in the introduction.

All throughout the section $G$ is a torsion group and, as in the previous section, $K$ is a field and $p$ denotes the characteristic of $K$.

We start with a very elementary lemma.

\begin{lemma}\label{(bac)PI}
	Suppose that $A$ has the property $\PP_1$ with respect to $g\in K[T]\setminus \{0\}$ and let $h=Tg(T)$.  Then $h(bacA)=0$ for all  $a,b,c\in A$, such that
	$a^2=0=bc$.
\end{lemma}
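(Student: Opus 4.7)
The plan is to produce two square-zero elements of $A$ whose product is a conjugate of $bacr$, apply the hypothesis $\PP_1$ to this pair, and then reshape the resulting identity by a left and right multiplication to obtain $h(bacr)=0$.

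First, fix an arbitrary $r\in A$ and set $u=a$ and $v=crb$. Clearly $u^2=a^2=0$, while
$$v^2 = (crb)(crb) = cr(bc)rb = 0$$
using $bc=0$. Since $A$ has the property $\PP_1$ with respect to $g$, applying it to the pair $(u,v)$ gives $g(uv)=g(acrb)=0$.

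Next, I would multiply this identity on the left by $b$ and on the right by $acr$. Writing $g(T)=\sum_{i=0}^{d} g_i T^i$, the key observation is the telescoping identity
$$b\,(acrb)^i\,acr = (bacr)^{i+1}\qquad (i\ge 0),$$
which one verifies simply by rewriting the product $b\cdot(acrb\cdots acrb)\cdot acr$ and regrouping each $b$ with the following $acr$. Summing against the coefficients $g_i$ yields
$$0 = b\,g(acrb)\,acr = \sum_{i=0}^{d} g_i (bacr)^{i+1} = bacr\cdot g(bacr) = h(bacr).$$
Since $r\in A$ was arbitrary, this gives $h(bacA)=0$.

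There is no serious obstacle here; the only point that requires a moment of thought is the correct choice of the pair $(u,v)=(a,crb)$, which is dictated by the need to combine the two hypotheses $a^2=0$ and $bc=0$ into a single square-zero pair while keeping the product $uv=acrb$ related to $bacr$ by a cyclic shift, so that multiplying by $b$ on the left and $acr$ on the right cleanly converts $g(acrb)=0$ into $h(bacr)=0$.
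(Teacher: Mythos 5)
Your proposal is correct and follows essentially the same route as the paper: observe $(crb)^2=0$, apply $\PP_1$ to the pair $(a,crb)$ to get $g(acrb)=0$, and then use the telescoping identity $b\,g(acrb)\,acr = bacr\,g(bacr) = h(bacr)$. The paper's proof is just a one-line version of the same computation.
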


\begin{proof}
	For any $r\in A$ we have that $(crb)^2=0$. Thus, $g(acrb)=0$ and hence  $h(bacr)=bacrg(bacr)=bg(acrb)acr=0$.
\end{proof}

We use $\Cen_X(Y)$ to denote the centralizer of $Y$ in $X$, whenever $X$ and $Y$ are subsets of a group or a ring. In case $X$ is a group and $Y\subseteq X$ then $N_X(Y)$ denotes the normalizer of $Y$ in $X$.

If $g\in G$ then $\widehat{g}$ denotes the sum of the elements of $\GEN{g}$ in $KG$.
It is easy to see that if $g,h\in G$ then $h\in \Cen_G(\widehat{g})$  if and only if $h\in N_G(\GEN{g})$ if and only if $(g-1)h\widehat{g}=0$.

Let $N(KG)$ denote the nilpotent radical of $KG$, namely, the sum of all nilpotent ideals of $KG$.
Let also
$$\Delta(G)=\{g\in G : g \text{ has only finitely many conjugates in } G\} = \{g\in G\ |\ [G:\Cen_G(g)]<\infty \},$$
the f.c. subgroup of $G$, and
$$\Delta^p(G)=\GEN{g\in \Delta(G) : g \text{ is a }p\text{-element}},$$
with $\Delta^0(G)=1$.
Recall that $G$ is called an f.c. group, if all the conjugacy classes of $G$ are finite, i.e. if $G=\Delta(G)$.

\begin{lemma}\label{idempotentnilpotent}
Suppose that $KG$ satisfies the following conditions:
\begin{enumerate}
\item\label{centralidempotent} Every idempotent of $KG$ is central;
\item\label{squarezeroandnilpotent}
If $a,b,c\in KG$ with $bc=0$ and $a$ is nilpotent then $bac=0$.
\end{enumerate}
Then $G$ is an f.c. group.
\end{lemma}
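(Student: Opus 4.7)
The plan is to show $G=\Delta(G)$. Given $g\in G$, write $g=g_pg_{p'}$ as the product of its commuting $p$- and $p'$-parts (with the convention $g_p=1$ if $p=0$). Since $\Cen_G(g)\supseteq\Cen_G(g_p)\cap\Cen_G(g_{p'})$, it suffices to show that $g_p$ and $g_{p'}$ each lie in $\Delta(G)$.

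For the $p'$-part I would proceed as follows. If $h$ is a $p'$-element of $G$, then $n=|h|$ is invertible in $K$, so $e=n^{-1}\widehat{h}$ is an idempotent of $KG$. Hypothesis~(1) forces $e$ to be central, hence $\widehat{h}$ is central. A direct computation shows that centrality of $\widehat{h}$ is equivalent to $\GEN{h}$ being normal in $G$, which bounds $[G:\Cen_G(h)]\le|\Aut(\GEN{h})|<\infty$, whence $h\in\Delta(G)$.

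For the $p$-part: let $g$ be a $p$-element, so $a=g-1$ is nilpotent in $KG$. For any torsion $h\in G$, the pair $b=h-1$, $c=\widehat{h}$ satisfies $bc=0$. Hypothesis~(2) then gives $(h-1)(g-1)\widehat{h}=0$, which, using $h\widehat{h}=\widehat{h}$, reduces to $(hgh^{-1}-g)\widehat{h}=0$, forcing $g^{-1}(hgh^{-1})\in\GEN{h}$; equivalently $g\in N_G(\GEN{h})$. Hence every $p$-element of $G$ normalizes every cyclic subgroup. Since every subgroup of a torsion group is generated by its cyclic subgroups, $g$ in fact lies in the norm $N(G)=\bigcap_{H\le G}N_G(H)$, and by a classical theorem of Schenkman one has $N(G)\le Z_2(G)$. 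Consequently $[G,g]\subseteq Z(G)$; the map $h\mapsto[h,g]$ is then a group homomorphism $G\to Z(G)$ with kernel $\Cen_G(g)$ and image $[G,g]$, and so $|C_g|=|[G,g]|$.

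The main obstacle is to show that the abelian group $[G,g]$ is finite. Each $[h,g]$ lies in $\GEN{h}\cap Z(G)$, and the condition $[h,g]\in Z(G)$ translates, via $ghg^{-1}=h^{j(h)}$, into the numerical constraint $|h|\mid(j(h)-1)^2$, which sharply restricts the possible commutators. The plan to finish is by contradiction: assume $|C_g|=\infty$; then the normal closure $H=\GEN{g^G}$ is an infinite locally finite $p$-subgroup of $G$, since by the previous step any two conjugates of $g$ normalize one another's cyclic subgroups and therefore generate a finite $p$-group, and the augmentation ideal $\omega(H)$ is a nil ideal of $KG$. Combining hypothesis~(2) applied to nilpotent elements drawn from $\omega(H)$ with hypothesis~(1) applied to idempotents built from finite $p'$-subgroups of $G$ should yield either a non-central idempotent (violating~(1)) or elements $b,c$ with $bc=0$ together with a nilpotent $a$ for which $bac\ne 0$ (violating~(2)). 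Executing this final contradiction is the crux of the argument.
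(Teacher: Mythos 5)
Your proposal is incomplete: the $p$-element case is left with an acknowledged gap. You correctly extract the two key computations (for a $p'$-element $h$, centrality of the idempotent $\frac{1}{|h|}\widehat{h}$ forces $\GEN{h}\trianglelefteq G$; and for a $p$-element $g$ and arbitrary $h$, hypothesis~(2) applied to $a=g-1$, $b=h-1$, $c=\widehat{h}$ forces $g\in N_G(\GEN{h})$). But from there you detour into Schenkman's theorem on the norm $N(G)\le Z_2(G)$ and reduce the problem to showing that $[G,g]$ is finite, a reduction you do not carry out --- you end by saying that executing the final contradiction ``is the crux of the argument.'' That crux is precisely what a proof must contain, and the sketch you give for it (hoping that $\omega(H)$ for $H=\GEN{g^G}$ will produce a violation of~(1) or~(2)) is not an argument: no non-central idempotent or failing triple $(a,b,c)$ is actually exhibited.

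The gap is avoidable, because the two facts you already have assemble into a much shorter global statement, which is how the paper proceeds. Since every $p$-element normalizes every cyclic subgroup, any two $p$-elements $x,y$ generate the finite $p$-group $\GEN{x}\GEN{y}$, so the set $P$ of $p$-elements is a subgroup, normal because conjugation preserves $p$-elements; likewise the normality of every $\GEN{h}$ with $h$ a $p'$-element makes the set $Q$ of $p'$-elements a normal subgroup. Hence $G=P\times Q$, so $P$ and $Q$ centralize each other, and therefore \emph{every} cyclic subgroup of $G$ is normal: $\GEN{x}$ for $x\in P$ is normalized by $P$ (your computation) and centralized by $Q$, and dually for $Q$; for general $g=g_pg_{p'}$ one has $g^G\subseteq \GEN{g_p}\GEN{g_{p'}}=\GEN{g}$. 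As $G$ is torsion, each $\GEN{g}$ is finite and contains the conjugacy class of $g$, so $G$ is f.c.\ with no need for $N(G)$, $Z_2(G)$, or any finiteness analysis of $[G,g]$. I recommend you replace everything after your normalization computations with this assembly step.
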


\begin{proof}
Let $P$ be the set of $p$-elements of $G$ and let $Q$ be the set of $p'$-elements of $G$.
Clearly  $P^{-1}=P^{g}=P$ and $Q^{-1}=Q^{g}=Q$ for all $g\in G$.
Moreover every element of $G$ is of the form $pq=qp$ for $p\in P$ and $q\in Q$.
If $x \in Q$ has order $m$ then $\frac{1}{m}\widehat{x}$ is an idempotent of $KG$ and, by (\ref{centralidempotent}), it is central in $KG$.
Thus $\GEN{x}$ is normal in $G$.
This shows that every subgroup of $G$ contained in $Q$ is normal in $G$ and using this it is easy to see that $Q$ is closed under products and hence $Q$ is a normal subgroup of $G$.
Suppose now that $x\in P$ and $g\in G$.
Then $x-1$ is nilpotent and $(g-1)\widehat{g}=0$.
Thus, by (\ref{squarezeroandnilpotent}), we have $0=(g-1)(x-1)\widehat{g}=(g-1)x\widehat{g}$. Hence $x$ normalizes $\GEN{g}$.
In particular, $\GEN{x}$ is normal in $P$. As above, this implies that all the subgroups of $P$ are normal in $P$ and $P$ is closed under products. Hence $P$ and $Q$ are normal subgroups of $G$ with $G=PQ$ and $P\cap Q=1$.
Thus $G=P\times Q$ and every subgroup of $P$ (respectively, $Q$) is normal in $G$. Then $G$ is an f.c. group, as $G$ is torsion.
\end{proof}

\begin{lemma}\label{IdealPI}
If $\U(KG)$ satisfies a LPI which has the property $\PP$ and $KG$ contains a right ideal $I$ satisfying a PI of degree $k$ such
that $I^k\neq
0$ then $KG$ satisfies a PI.
\end{lemma}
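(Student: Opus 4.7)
The plan is to combine the two sources of polynomial identity information present in $KG$: a global control on nilpotents furnished by the LPI hypothesis, and the non-nilpotent PI right ideal $I$ provided by the assumption.

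First I would invoke property $\PP$ of the LPI: this provides a non-zero $g\in K[T]$ such that $KG$ has property $\PP_1$ with respect to $g$. Setting $h(T)=Tg(T)$, Lemma~\ref{(bac)PI} then yields the uniform statement that $h(bacr)=0$ for all $a,b,c,r\in KG$ with $a^2=0=bc$; equivalently, every right ideal of the shape $bac\cdot KG$ satisfies the polynomial identity $h$. This is the nilpotent-control that will replace the group-identity arguments used in the locally finite $p'$-case.

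Second, I would use $I^k\ne 0$ to pick $x_1,\ldots,x_k\in I$ with $x=x_1\cdots x_k\ne 0$. The right ideal $xKG\subseteq I$ then inherits the given PI of degree $k$, and by construction provides a non-nilpotent witness inside $I$. Moreover, because conjugation by any $g\in G$ is a $K$-algebra automorphism of $KG$, each conjugate right ideal $gxg^{-1}\,KG$ still satisfies a PI of degree $k$, so the two-sided ideal $J=KG\cdot x\cdot KG$ is a sum of conjugate PI right ideals.

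The central step, and the main obstacle, is to promote this local PI data to a PI on all of $KG$. My approach would be to show that (i) the two-sided ideal $J$ itself satisfies a PI, possibly after passing modulo the nilpotent radical $N(KG)$, and (ii) the quotient $KG/J$ is controlled entirely by the polynomial $h$ from the first step, so that it too satisfies a PI; combining PI's on $J$ and on $KG/J$ by a standard extension theorem then yields a PI on $KG$. Executing this step cleanly requires a careful interplay between the uniform nilpotent-control provided by property $\PP$ (through $h$) and the non-nilpotency of $I$ (through $I^k\ne 0$): these are precisely the two hypotheses that prevent the construction from collapsing into the nilradical (without $I^k\ne 0$) or from losing control of the quotient (without $\PP$). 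The delicate part is to make the bound on the degree of the PI of $J$ and of $KG/J$ effective enough that Amitsur--Levitzki-type arguments, in the spirit of Proposition~\ref{Gp'grouplocfin}, can be applied to any finite subgroup of $G$ and thus globalised.
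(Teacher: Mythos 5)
Your proposal correctly identifies the two hypotheses to be combined, and the first step (property $\PP$ gives a non-zero $g$, hence via Lemma~\ref{(bac)PI} a one-variable identity $h$ on every right ideal $bac\,KG$ with $a^2=0=bc$) is sound and is indeed used in the paper, though in the proof of Theorem~\ref{soe} rather than in this lemma. But the ``central step'' you describe is precisely the content of the lemma, and you do not carry it out: you offer no argument for why the two-sided ideal $J=KG\,x\,KG$ should satisfy a PI (an infinite sum of conjugate right ideals each satisfying a PI of degree $k$ need not satisfy a PI of bounded degree, so the degree control is lost), and no argument at all for why $KG/J$ should satisfy a PI --- property $\PP_1$ only constrains elements of the very special form $bac$ with $a^2=0=bc$, and nothing in your setup transfers that to the quotient by $J$. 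The ideal-plus-quotient strategy is therefore a restatement of the difficulty, not a resolution of it.

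The missing idea is the passage through \emph{generalized} polynomial identities. The paper invokes Lemma~2.8 of \cite{Liu1999}: the existence of a right ideal $I$ satisfying a PI of degree $k$ with $I^k\ne 0$ forces $KG$ to satisfy a GPI. Passman's structure theorem for group algebras with a GPI (\cite[Theorem~5.3.15]{Passman77}) then yields the finiteness conditions $[G:\Delta(G)]<\infty$ and $|\Delta(G)'|<\infty$. This is the group-theoretic leverage your plan lacks. From there the paper sets $C=\Cen_{\Delta(G)}(\Delta(G)')$ (nilpotent, of finite index in $G$) and $P$ its set of $p$-elements; nilpotency makes $\U(KC)\to\U(K(C/P))$ surjective, so the LPI with property $\PP$ passes to $\U(K(C/P))$, where $C/P$ is a locally finite $p'$-group and Proposition~\ref{Gp'grouplocfin} applies. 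Passman's Corollary~5.3.10 then converts the PI on $K(C/P)$ into a $p$-abelian subgroup of finite index in $G$, and back into a PI for $KG$. Without the GPI step there is no known route from the hypothesis on $I$ to global structural information about $G$, so your argument as written cannot be completed.
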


\begin{proof}
Recall that $p$ denotes the characteristic of $K$.
Let $\Delta=\Delta(G)$, $C=\Cen_\Delta(\Delta')$ and let $P$ be the set of $p$-elements of $C$.
In case $p=0$ then $P=1$.
Since $C'\subseteq \Z(C)$, $C$ is nilpotent and hence $P$ is a subgroup of $C$.
By \cite[Lemma~2.8]{Liu1999}, the hypothesis on the ideal $I$ implies that $KG$ satisfies a generalized polynomial identity.
Thus $[G:\Delta]<\infty$ and $|\Delta'|<\infty$, by  \cite[Theorem~5.3.15]{Passman77}.
The former implies that $G$ is locally finite, as so is $\Delta$ by \cite[Lemma~4.1.8]{Passman77}.
The latter yields $[G:C]=[G:\cap_{g\in \Delta'} \Cen_G(g)]\le \prod_{g\in \Delta'} [G:\Cen_{G}(g)]<\infty$ and $G$ is locally finite.
As $C$ is nilpotent, the natural map $\U(KC)\rightarrow\U(K(C/P))$ is surjective.
Thus $\U(K(C/P))$ satisfies a LPI which has the property $\PP$.
Therefore $K(C/P)$ has the property $\PP_1$ with respect to some polynomial and $C/P$ is a locally finite $p'$-group.
Then, by Proposition~\ref{Gp'grouplocfin}, we have that $K(C/P)$ satisfies a PI.
Hence, $C/P$ has a subgroup of finite index $A/P$  with $(A/P)'$ a finite $p$-group, by \cite[Corollary~5.3.10]{Passman77}.
(In case $p=0$, $A$ is abelian by \cite[Corollary~5.3.8]{Passman77}.)
Thus $[G:A]=[G:C][C:A]<\infty$ and $A/P$ is abelian.
Hence $A'\subseteq P\cap C'\subseteq P\cap \Delta'$.
Therefore $A'$ is a finite $p$-group.
Therefore $G$ has a $p$-abelian subgroup of finite index.
Hence $KG$ satisfies a PI \cite[Corollary~5.3.10]{Passman77}.
\end{proof}


\begin{proofof} \textbf{Theorem~\ref{soe}}.
By Lemma~\ref{IdealPI} we may assume that for every right ideal $I$ of $KG$, if $I$ satisfies a PI of degree $k$ then $I^k=0$.
Let $f$ be a LPI of $KG$ which has the property $\PP$.

We first prove the theorem under the assumption that $\Delta^p(G)=1$.
Then $KG$ is semiprime and hence $\Delta(G)$ is a $p'$-group. This is clear if $p=0$ and a consequence of \cite[Theorem~4.2.13]{Passman77} if $p>0$.
On the other hand, by Lemma~\ref{(bac)PI} for all $a,b,c\in KG$, such that $a^2=0=bc$, we have that the ideal $bacKG$ satisfies a PI in one variable.
Thus $bacKG$ is a nilpotent ideal, by the hypothesis in the first paragraph.
As $KG$ is semiprime, we deduce that $bac=0$.
By \cite[Lemma~2]{GiambrunoJespersValenti1994}, every idempotent of $KG$ is central and, by
\cite[Lemma~2.1]{GiambrunoSehgalValenti1997}, for all $a,b,c\in KG$, with $a$ nilpotent and $bc=0$, we have that $bac=0$.
Thus, by Lemma~\ref{idempotentnilpotent} we have that $G$ is an f.c. torsion group and, in particular, it is locally finite \cite[Lemma~4.1.8]{Passman77}.
Therefore $G=\Delta(G)$, a $p'$-group.
Hence, $KG$ satisfies a PI, by Proposition~\ref{Gp'grouplocfin}.

We now suppose that $\Delta^p(G)$ is finite but non-trivial. The latter implies that $p>0$.
Then the kernel of the natural map $KG\rightarrow K(G/\Delta^p(G))$ is nilpotent by \cite[Lemma~3.1.6]{Passman77}.
Thus  $\U(KG)\rightarrow\U(K(G/\Delta^p(G)))$ is surjective and $\U(K(G/\Delta^p(G)))$  satisfies $f$.
Moreover $\Delta^p(G/\Delta^p(G))=1$.
Therefore,  $K(G/\Delta^p(G))$ satisfies a PI, by the previous paragraph. Then, by \cite[Corollary~5.3.10]{Passman77},
 $G/\Delta^p(G)$ has a $p$-abelian subgroup $A/\Delta^p(G)$ of finite index.
Therefore $A'\Delta^p(G)/\Delta^p(G)$ is a finite $p$-group. Thus $A'$ is a finite $p$-group and hence $KG$ satisfies a PI, again by
\cite[Corollary~5.3.10]{Passman77}.

In the remainder of the proof we argue by contradiction to prove that $\Delta^p(G)$ is finite, and so the theorem follows by the previous paragraphs.
For that we use the following ring homomorphism, where $n$ is the number of variables of $f$ and $T_1,\dots,T_n$ are independent commuting variables
\begin{eqnarray*}
    K\GEN{X_1^{\pm 1},\dots,X_n^{\pm 1}}&\stackrel{\Phi}{\longrightarrow} &K\GEN{X_1,\cdots, X_n}[[T_1,\dots,T_n]] \\
    X_i& \mapsto & 1+X_iT_i
    \end{eqnarray*}
By the argument of \cite[Lemma~1]{Makar-Limanov1988}, $\Phi$ is injective.
Moreover,
$$\Phi(f)=\sum p_{i_1i_2\cdots i_n}(X_1,\cdots, X_n)T^{i_1}T^{i_2}\cdots T^{i_n},$$
where each $p_{i_1i_2\cdots i_n}(X_1,\cdots, X_n)$ is a homogeneous polynomial in $K\GEN{X_1,\cdots, X_n}$ which has degree $i_j$ in the variable $X_j$ for every $j=1,\dots,n$.
Take fixed indices $m_1,m_2,\cdots, m_n$ such that $p_{m_1m_2\cdots m_n}$ is a non-zero polynomial with $m=m_1+m_2+\dots+m_n$ minimal. Then $m\ne 0$
because $p_{00\cdots0}(X_1,\cdots, X_n)=p_{00\cdots0}(1,\cdots,1)=\Phi(f(1,\cdots, 1))(0,\cdots,0)=0$.

Recall that we are assuming that $\Delta^p(G)$ is not finite.
Then $N(KG)$ is not nilpotent by \cite[Theorem~8.1.12]{Passman77}.
Therefore, by the argument of \cite[Fact 3]{YehChuang1996},
there exists a nilpotent ideal $I$ of $KG$ such that
$I^{m}\neq 0$ and $I^{m+1} = 0$.
Thus, for any $a_1,\cdots, a_n\in I$ and
for any  indices $i_1,i_2,\cdots, i_n$ such that $i_1+\dots+i_n>m$,
we have that $p_{i_1i_2\cdots i_n}(a_1,\cdots, a_n)=0$.
Therefore $f(a_1,\dots,a_n)(T_1,\dots,T_n)\in A[T_1,\dots,T_n]$ and $0=f(a_1,\cdots, a_n)=\Phi(f(a_1,\cdots,
a_n))(1,1,\dots,1)=\sum_{i_1+\dots+i_n=m}  p_{i_1i_2\cdots i_n}(a_1,\cdots, a_n).$
Hence, $\sum_{i_1+\dots+i_n=m} p_{i_1i_2\cdots i_n}(X_1,\cdots, X_n)$
is a PI of $I$, of degree $m$, a contradiction with the hypothesis in the first paragraph of the proof.
This finishes the proof of the theorem.
\end{proofof}

\section{On LPIs of $\U(\FC)$}\label{SectionExamples}

In this section we prove Proposition~\ref{Support4} by first giving a list of necessary conditions for a Laurent polynomial in two variables to be an LPI of $\U(\FC)$. In order to state this we need to introduce some notation.

As in the previous sections $K$ is a field.
Moreover, $F=\GEN{X^{\pm 1},Y^{\pm 1}}$ is a free group of rank 2.

Let $w\in F$. Then $w$ has a unique expression as follows:
    \begin{equation}\label{Word}
	w=X^{n_0}Y^{m_1}X^{n_1}\cdots Y^{m_{k-1}}X^{n_{k-1}}Y^{m_k}X^{n_k}
\end{equation}
where $k\ge 0$, $n_0$ and $n_k$ are integers and $m_1,n_1,\dots,m_{k-1},n_{k-1},m_k$ are non-zero integers.
We call this expression the \emph{normal form} of $w$.
A \emph{subword} of $w$ is an element of one of the following forms:
$$X^{n_{i-1}}Y^{m_i}X^{n_i} \dots Y^{m_j}, \quad   X^{n_{i-1}}Y^{m_i}X^{n_i} \dots Y^{m_j}X^{n_j}, \quad
Y^{m_i}X^{n_i} \dots Y^{m_j}, \quad Y^{m_i}X^{n_i} \dots  Y^{m_j}X^{n_j}$$
with $1\le i \le j \le k$.

If $w\ne 1$ then the \emph{beginning} and \emph{end} of $w$ are respectively
$$B(w)=\begin{cases} X, & \text{if } n_0>0; \\ X^{-1}, & \text{if } n_0<0; \\
Y, & \text{if } n_0=0 \text{ and } m_1>0; \\ Y^{-1}, & \text{if } n_0=0 \text{ and } m_1<0;
\end{cases}
\qand
E(w)=\begin{cases} X, & \text{if } n_k>0; \\ X^{-1}, & \text{if } n_k<0; \\
Y, & \text{if } n_k=0 \text{ and } m_k>0; \\ Y^{-1}, & \text{if } n_k=0 \text{ and } m_k<0.
\end{cases}
$$
We also set $B(1)=E(1)=1$.

We associate to $w$ the following integers:
$$\sgn(w) = (-1)^{N(w)} \qand
C(w) =  \sum_{i=0}^k |n_i| + \sum_{i=1}^k |m_i| - M,$$
where $N(w)$ is the number of subwords of $w$ of form $X^nY^m$ or $Y^mX^n$ with $n>0$  and $m<0$; and $M$ is the number of subwords of $w$ of form $X^nY^m$  or  $Y^nX^m$ with $n<0$ and $m>0$.
We call $C(w)$ the \emph{cumulus} of $w$.

The following lemma collects some elementary properties of these concepts.

\begin{lemma}\label{CumulusSignPropiedades}
	Let $w\in F$ and let $c=C(w)$.
	\begin{enumerate}
		\item\label{Cumulus0} $c=0$ if and only if $w=1$.
		\item\label{Cumulus1} $c=1$ if and if  $w\in \{X, X^{-1}, Y, Y^{-1}, X^{-1}Y,Y^{-1}X\}$.
		\item\label{CumulusInverse} $C(w)=C(w^{-1})$.
		\item\label{CumulusDescomposicion}
		$w$ has a unique expression as product of $c$ elements of cumulus $1$ and cannot be written as product of less than $c$ elements of cumulus $1$.
		In particular, $w=w_1w'$ for unique $w_1,w'\in F$ with $C(w_1)=1$ and $C(w')=c-1$.
		\item\label{SignoInduccion}
		If $w_1$ and $w'$ are as in \eqref{CumulusDescomposicion} then $B(w)=B(w_1)$, $E(w)=E(w')$ and 		
		$$\sgn(w)=\begin{cases}
		-\sgn(w_1)\;\sgn(w'), & \text{if }
		E(w_1)=X \text{ and } B(w')=Y\inv;	\\
		\sgn(w_1)\;\sgn(w'), & \text{otherwise.}
		\end{cases}$$
	\end{enumerate}
\end{lemma}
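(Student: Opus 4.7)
The plan is to prove parts (\ref{Cumulus0})--(\ref{SignoInduccion}) together around the central result (\ref{CumulusDescomposicion}); part (\ref{CumulusInverse}) is independent. Everything except (\ref{CumulusInverse}) is an induction on the reduced length $\ell(w) := \sum_{i=0}^k |n_i| + \sum_{i=1}^k |m_i|$, driven by an explicit construction of the leftmost cumulus-$1$ factor of $w$.

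For (\ref{CumulusInverse}) I inspect the normal form directly: inverting $w$ reverses the block order and negates every exponent, so $\ell$ is preserved, and an $M$-subword $X^{n_{i-1}}Y^{m_i}$ of $w$ (with $n_{i-1}<0$, $m_i>0$) corresponds to the subword $Y^{-m_i}X^{-n_{i-1}}$ of $w^{-1}$, which is of the form $Y^n X^m$ with $n<0$, $m>0$ and hence lies in $M(w^{-1})$; the analogous match holds for $Y^{m_i}X^{n_i}$ subwords, whence $M(w)=M(w^{-1})$ and $C(w)=C(w^{-1})$.

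For (\ref{CumulusDescomposicion}) I define $w_1$ by the following rule read from the normal form of $w$: if $n_0\ne 0$, set $w_1=X^{\sgn(n_0)}$ except that $w_1=X^{-1}Y$ when $n_0=-1$ and $m_1>0$; if $n_0=0$, use the symmetric rule with $Y$ in place of $X$. In all cases $w_1$ lies in the list $\{X^{\pm 1},Y^{\pm 1},X^{-1}Y,Y^{-1}X\}$, each of which has cumulus $1$ by direct computation. Setting $w':=w_1^{-1}w$ and computing $\ell(w')$ and $M(w')$ block by block in each subcase yields $\ell(w')-M(w')=C(w)-1$ and $\ell(w')<\ell(w)$; the inductive hypothesis then gives a unique decomposition of $w'$ into $C(w)-1$ cumulus-$1$ factors, and prepending $w_1$ completes the required decomposition of $w$. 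Uniqueness of $w_1$ is forced by $B(w)$ (which restricts $w_1$ to at most two candidates) and the constraint $C(w')=C(w)-1$, which the above computation shows singles out exactly one of them. The ``no shorter decomposition'' clause rests on the subadditivity inequality $C(u_1v)\le C(u_1)+C(v)$ for cumulus-$1$ elements $u_1$, proved by the same case analysis: prepending a cumulus-$1$ element raises $C$ by at most $1$. Parts (\ref{Cumulus0}) and (\ref{Cumulus1}) are immediate corollaries: $C(w)=0$ iff the decomposition is empty iff $w=1$, and $C(w)=1$ iff $w$ is itself a single cumulus-$1$ factor, hence belongs to the six-element list.

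For (\ref{SignoInduccion}), the equalities $B(w)=B(w_1)$ and (for $c\ge 2$) $E(w)=E(w')$ are read off the construction, since $w_1$ alters only the initial segment of the normal form. To establish the sign formula I account for each transition of $w$: those internal to $w_1$ contribute $N(w_1)=0$ unless $w_1=Y^{-1}X$, in which case $N(w_1)=1$; those internal to $w'$ contribute $N(w')$; and at the junction, if $E(w_1)$ and $B(w')$ lie in the same alphabet the blocks merge without changing any $N$-contribution, whereas otherwise a single new transition appears, and running through the six choices of $w_1$ against the four choices of $B(w')$ shows that this new transition is counted by $N$ precisely when it has the form $XY^{-1}$, i.e.\ $E(w_1)=X$ and $B(w')=Y^{-1}$ (the potentially symmetric pattern $Y^{-1}X$ does not arise as a junction, because the greedy construction has already absorbed any $Y^{-1}$-then-$X$ pair into the cumulus-$1$ factor $Y^{-1}X$). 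The sign identity then follows by reducing the total count of $N$ modulo $2$. The principal obstacle throughout is the bookkeeping induced by the asymmetry of $M$ and $N$ (only negative-to-positive transitions are penalized), which forces a case-by-case verification of $\ell(w')-M(w')=C(w)-1$ in (\ref{CumulusDescomposicion}) and also explains why the junction correction in (\ref{SignoInduccion}) is localised to the single pattern $XY^{-1}$ rather than being symmetric in $X$ and $Y$.
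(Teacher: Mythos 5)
Your proposal is correct and follows essentially the same route as the paper: the same greedy leftmost cumulus-$1$ factor $w_1$ (your case split ``$n_0=-1$ and $m_1>0$'' is exactly the paper's condition $B(Xw)=Y$), induction to obtain existence and uniqueness of the decomposition, and the same junction bookkeeping for the sign formula, including the key observation that a $Y^{-1}$-then-$X$ junction cannot occur because the greedy rule absorbs it into the factor $Y^{-1}X$. The only cosmetic differences are that you induct on reduced length rather than on $c$ and phrase the ``no shorter decomposition'' clause via subadditivity of $C$ under left multiplication by cumulus-$1$ elements, whereas the paper argues directly on a minimal-length decomposition.
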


\begin{proof}
\eqref{Cumulus0}, \eqref{Cumulus1} and \eqref{CumulusInverse} are straightforward.

\eqref{CumulusDescomposicion}
We argue by induction on $c$ with the case $c=1$ being trivial.
Suppose $c>1$ and that the result is true for elements of cumulus less than $c$.
Assume also that the normal form of $w$ is as in \ref{Word}.
Let
$$w_1=\begin{cases}
X, & \text{if } B(w)=X; \\
Y, & \text{if } B(w)=Y; \\
Y^{-1}, & \text{if } B(w)=Y^{-1} \text{ and } B(Yw) \in \{1,Y^{-1},X^{-1}\};\\
Y^{-1}X, & \text{if } B(w)=Y^{-1} \text{ and } B(Yw) =X;\\
X^{-1}, & \text{if } B(w)=X^{-1} \text{ and } B(Xw) \in \{1,X^{-1},Y^{-1}\}; \\
X^{-1}Y, & \text{if } B(w)=X^{-1} \text{ and } B(Xw) =Y;
\end{cases}$$
and $w'=w_1^{-1}w$.
Observe that $w_1$ is the unique element of cumulus $1$ satisfying the following conditions:
\begin{itemize}
	\item[(a)] $B(w)=B(w_1)$.
	\item[(b)] if $B(w)=X^{-1}$ then $B(Xw)=Y$ if and only if $B(Xw_1)=Y$.
	\item[(c)] if $B(w)=Y^{-1}$ then $B(Yw)=X$ if and only if $B(Yw_1)=X$).
\end{itemize}
Moreover, $C(w_1)=1$ and a case-by-case argument shows that $C(w')=c-1$. By the induction hypothesis $w'=w_2\dots w_c$ for unique elements $w_2,\dots,w_c$
of cumulus $1$. Thus $w$ is a product of $c$ elements of cumulus $1$.
Let $d$ be minimal so that $w$ is a product of $d$ elements of cumulus $1$.
Suppose that $w=v_1v_2\dots v_d$ with $C(v_i)=1$ for each $i$.
This implies that $v_iv_{i+1}\ne 1$ and $C(v_iv_{i+1})\ge 2$ for every $i<d$.
The first implies $B(w)=B(v_1)$. Using the second, we have that if $B(w)=X^{-1}$ then either $v_1=X^{-1}$ and
$B(v_2) \in \{X^{-1}, Y^{-1}\}$ or $v_1=X^{-1}Y$ and
$v_2 \in \{X, Y, X^{-1},X^{-1}Y\}$.
In both cases $B(Xw)=Y$ if and only if $B(Xv_1)=Y$. Similarly, if $B(w)=Y^{-1}$ then $B(Yw)=X$ if and only if $B(Yv_1)=X$.
This shows that $v_1$ satisfies the conditions stated in (a) and (b) for $w_1$, so that $v_1=w_1$ and hence $w'=v_2\dots v_d$.
By induction hypothesis $c=d$ and $w_i=v_i$ for every $i$.

\eqref{SignoInduccion}
By \eqref{CumulusDescomposicion}, $w_1$ and $w'$ have to be the elements in the proof of \eqref{CumulusDescomposicion}.
Using this it easily follows that if $E(w_1)=X$ and $B(w')=Y^{-1}$ then
$N(w)=N(w_1)+N(w')+1$. Therefore $\sgn(w)=-\sgn(w_1)\sgn(w')$.
A similar argument shows that in all the other cases $N(w)\equiv N(w_1)+N(w')\mod 2$ and therefore $\sgn(w)=\sgn(w_1)\sgn(w')$.
\end{proof}

Let now $f=\sum_{w\in F} f_{w} w\in K\GEN{X^{\pm 1},Y^{\pm 1}}\setminus \{0\}$, with each $f_{w}\in K$ and define
	$$C(f)=\max \{C(w): w\in \Supp(f)\}.$$
For every $b,e\in \{X,X^{-1},Y,Y^{-1}\}$ we denote
$$f_{b,e} = \sum_{\matriz{{c}C(w)=C(f)\\B(w)=b, E(w)=e}} \sgn(w) f_w$$
Finally, we define the following four elements of $K$:
\begin{eqnarray*}
f_1&=&f_{X,X}+f_{X,Y}+f_{Y,X}+f_{Y,Y}-f_{Y\inv,X}-f_{Y\inv,Y}, \\
f_2&=&f_{X,X\inv}+f_{X,Y}+f_{X,Y\inv}+f_{Y,X\inv}+f_{Y,Y}+f_{Y,Y\inv}-f_{Y\inv,X\inv}-f_{Y\inv,Y}-f_{Y\inv,Y\inv}, \\
f_3&=&f_{X\inv,X}+f_{X\inv,Y}+f_{Y\inv,X}+f_{Y\inv,Y}, \\
f_4&=&f_{X\inv,X\inv}+f_{X\inv,Y}+f_{X\inv,Y\inv}+f_{Y\inv,X\inv}+f_{Y\inv,Y}+f_{Y\inv,Y\inv}.
\end{eqnarray*}

We are ready to present the main result of this section.

\begin{proposition}\label{ThExamples}
If $f\in K\GEN{X^{\pm 1},Y^{\pm 1}}$ is a LPI of $\U(\FC)$ then $f_i=0$ for every $i \in \{1,2,3,4\}$.
\end{proposition}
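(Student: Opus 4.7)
My plan is to evaluate $f$ at a family of units in $\U(\FC)$ and extract each $f_i$ as a specific coefficient of the resulting element of $\FC$ (which vanishes because $f$ is an LPI). The natural starting substitution is $X = 1+\alpha$ and $Y = 1+\beta$, whose inverses $1-\alpha$ and $1-\beta$ exist because $\alpha^2=\beta^2=0$. Expanding $f(1+\alpha,1+\beta) = 0$ and expressing the result in the $K$-basis of $\FC$ formed by $1$ and the alternating words in $\alpha,\beta$ yields linear relations among the coefficients $f_w$. To obtain enough equations I would also use the three variants obtained from the $K$-algebra automorphisms $\alpha\mapsto -\alpha$ and/or $\beta\mapsto -\beta$ of $\FC$ (which correspond to the Laurent substitutions $X\mapsto X^{-1}$ and/or $Y\mapsto Y^{-1}$).

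For a word $w$ with cumulus decomposition $w=w_1\cdots w_c$ (unique by Lemma~\ref{CumulusSignPropiedades}(\ref{CumulusDescomposicion})), the image $\bar w$ factors as $\bar w_1\cdots \bar w_c$. Each $\bar w_i$ has a well-understood top term: $\pm\alpha,\pm\beta,-\alpha\beta,-\beta\alpha$ according to which of the six cumulus-1 types $w_i$ is. The top alternating word of $\bar w$ is obtained by concatenating these tops, contingent on the starting letter of each matching the ending letter of the previous one, and the resulting coefficient is a product of the individual top signs. The starting and ending letters of the top word are in turn determined by $B(w_1)=B(w)$ and $E(w_c)=E(w)$, classifying the contribution into one of four cases according to whether $B(w), E(w)$ is $X$-type or $Y$-type.

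By extracting, from each of the four sign variants of the substitution, the coefficient of the longest alternating word of each of the four starting/ending patterns, one obtains four independent linear combinations of the $f_w$'s restricted to the maximal cumulus. After combining with the sign flips induced by $\alpha\mapsto-\alpha,\beta\mapsto-\beta$, the accumulated top signs encoded in this computation agree with $\sgn(w)=(-1)^{N(w)}$ by Lemma~\ref{CumulusSignPropiedades}(\ref{SignoInduccion}), and the four coefficient equations become precisely $f_1=f_2=f_3=f_4=0$.

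The main obstacle is the combinatorial bookkeeping in matching the extracted coefficients with the specific $(B(w),E(w))$-ranges and sign patterns in the definitions of the $f_i$. In particular, the long cumulus-1 elements $X^{-1}Y$ and $Y^{-1}X$ (which contribute tops of degree $2$) and the asymmetric sign $\sgn(Y^{-1}X)=-1$ must be handled carefully, and one must verify that the four sign-variant identities, when combined, isolate exactly the ranges $B(w)\in\{X,Y,Y^{-1}\}$ vs $\{X^{-1},Y^{-1}\}$ and $E(w)\in\{X,Y\}$ vs $\{X^{-1},Y,Y^{-1}\}$ with the correct signs on the $B(w)=Y^{-1}$ terms in $f_1,f_2$.
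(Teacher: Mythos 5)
Your overall strategy---evaluate $f$ at units of $\FC$, expand the (vanishing) result in a basis, and read off the $f_i$ as top-degree coefficients---is indeed the paper's strategy, but your choice of units is where the argument breaks down, and the problem is structural rather than one of bookkeeping. With $X\mapsto 1+\alpha$ and $Y\mapsto 1+\beta$ one has $X^n\mapsto 1+n\alpha$ and $Y^m\mapsto 1+m\beta$, so for $w=X^{n_0}Y^{m_1}X^{n_1}\cdots$ the longest alternating word occurring in $\overline{w}$ has length equal to the number of nonzero exponents (the syllable length of $w$) and coefficient $\prod_i n_i\prod_j m_j$. Neither quantity is the one the proposition is about: the $f_i$ are sums of $\sgn(w)f_w$ over the words of \emph{maximal cumulus}, and $C(w)=\sum_i|n_i|+\sum_j|m_j|-M$ is not the syllable length. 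Concretely, $XY$ has cumulus $2$ while $X^{-1}Y$ has cumulus $1$, yet $(1+\alpha)(1+\beta)$ and $(1-\alpha)(1+\beta)$ both have top word $\pm\alpha\beta$ of length $2$; conversely $X^2$ has cumulus $2$ but $(1+\alpha)^2=1+2\alpha$ has top word of length $1$ (your ``concatenation of tops'' $\alpha\cdot\alpha$ vanishes here, and the surviving top term comes from lower-order pieces, which your outline does not control). Hence the coefficient of the longest alternating word in $f(1\pm\alpha,1\pm\beta)$ mixes words of different cumuli, weighted by products of exponents (which can moreover vanish in characteristic $p$), and no combination of the four sign variants isolates the maximal-cumulus words with weights $\sgn(w)$.

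The missing idea is a choice of units for which the exponents contribute \emph{multiplicatively to the degree}. The paper embeds $\FC$ into $M_2(K[T])$ via $\alpha\mapsto e_{12}$, $\beta\mapsto Te_{21}$ and evaluates $f$ at $u=(1+\alpha\beta\alpha)(1+\beta)$ and $v=(1+\alpha\beta\alpha)(1+(1-\alpha)\beta(1+\alpha))$, whose images are $1+(e_{12}+e_{21})T+e_{11}T^2$ and $1+(e_{21}+e_{22}-e_{11})T+(e_{11}+e_{12})T^2$. For these, $\Phi(X^n)$ has leading term $\sgn(X^n)e_{11}T^{2|n|}$, and an induction along the cumulus decomposition of Lemma~\ref{CumulusSignPropiedades}.\eqref{CumulusDescomposicion} shows that the coefficient of $T^{2C(w)}$ in $\Phi(w)$ is exactly $\sgn(w)$ times a matrix depending only on $B(w)$ and $E(w)$ (Table~\ref{Principal}); the coefficient of $T^{2C(f)}$ in $\Phi(f)=0$ is then precisely the matrix with entries $f_1,f_2,f_3,f_4$. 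If you replace $1+\alpha$, $1+\beta$ by units with this degree-doubling property, the remainder of your outline (induction on the cumulus decomposition, tracking $B$, $E$ and the sign via Lemma~\ref{CumulusSignPropiedades}.\eqref{SignoInduccion}) does go through essentially as in the paper; as written, however, the proposal does not prove the statement.
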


\begin{proof}
Let $\varphi:\FC\rightarrow M_2(K[T])$ be the algebra homomorphism defined in the proof of Lemma~\ref{thekey} and let
	\begin{equation}\label{uv}
	u=(1+\alpha\beta\alpha)(1+\beta) \qand v=(1+\alpha\beta\alpha)(1+(1-\alpha)\beta(1+\alpha))
	\end{equation}
Let $\Phi$ denote the unique algebra homomorphism $\Phi:KF\rightarrow M_2(K(T))$
with  $\Phi(X)=\varphi(u)$  and $\Phi(Y) = \varphi(v)$.
We identify $M_2(K[T])$ with the polynomial ring $M_2(K)[T]$.
So we can talk of the leading term of a non-zero element of $M_2(K[T])$.

Let $e_{ij}$ denote the matrix having $1$ at the $(i,j)$ entry and zeros elsewhere.
Hence
\begin{eqnarray*}
\Phi(X)&=&1+(e_{12}+e_{21})T+e_{11}T^2 \\
\Phi(Y)&=&1+(e_{21}+e_{22}-e_{11})T+(e_{11}+e_{12})T^2.
\end{eqnarray*}

\textbf{Claim}: If $w$ is an element of $F$ of cumulus $c$ then the leading term in $\Phi(w)$ is as displayed in Table~\ref{Principal}.
\begin{table}[h!]
$$\matriz{{|c|c|l|}
	\hline
	B(w) & E(w) & \text{Leading term of } \Phi(w) \\\hline\hline
	& X                                	& T^{2c} \; \sgn(w) \;e_{11} \\
	X \text{ or } Y	& X\inv \text{ or } Y\inv  	&  T^{2c} \; \sgn(w) \;e_{12}\\
	& Y                                	& T^{2c} \; \sgn(w) \;(e_{11}+e_{12}) \\\hline
	& X                                	& T^{2c} \; \sgn(w) \;e_{21} \\
	X\inv              & X\inv \text{ or } Y\inv  	& T^{2c} \; \sgn(w) \;e_{22} \\
	& Y                                	& T^{2c} \; \sgn(w) \;(e_{21}+e_{22})  \\\hline
	& X                                	& T^{2c} \; \sgn(w) \;(e_{21}-e_{11})  \\
	Y\inv              & X\inv \text{ or }Y\inv   	& T^{2c} \; \sgn(w) \;(e_{22}-e_{12})  \\
	& Y                                	& T^{2c} \; \sgn(w)  (e_{21}+e_{22}-e_{11}-e_{12})  \\\hline
}$$
\caption{\label{Principal}}
\end{table}

Using the claim it easily follows that if $f\in KF$ with $C(f)=c$ then
$\Phi(f)$ is a polynomial of degree at most $2c$ with coefficients in $M_2(K)$ and the coefficient of $T^{2c}$ in these polynomial is the matrix
$$\pmatriz{{cc} f_1 & f_2 \\ f_3 & f_4}.$$
As $u,v\in \U(\FC)$, if $f$ is a LPI of $\U(\FC)$ then we have $\Phi(f)=\varphi(f(u,v))=0$ and therefore, $f_i=0$ for every $i=1,2,3,4$, as desired.

So we only have to prove the claim and for that we argue by induction on $c=C(w)$.
The case where $c=1$ can be checked directly.
Assume that $c>1$ and write $w=w_1w'$ with $C(w_1)=1$ and $C(w')=c-1$.
Then $B(w)=B(w_1)$, $E(w)=E(w')$.
Moreover, by induction hypothesis, the leading terms of $\Phi(w_1)$ and $\Phi(w')$ agrees with Table~\ref{Principal}.
We will show that the leading term of $\Phi(w)$ also agrees with Table~\ref{Principal} by calculating the product of the leading terms of $\Phi(w_1)$ and $\Phi(w')$. It turns out that this product is non-zero and hence it is the leading term of $\Phi(w)$ and the calculation will show that it agrees with Table~\ref{Principal}.

There are six possibilities for $w_1$ and nine cases for the leading term of $\Phi(w')$ according to the table.
However some of the possibilities are not compatible with the condition $C(w_1w')=c$.
We check the claim in all the cases considering separately the different values for $B(w')$.

Assume that $B(w')$ is either $X$ or $Y$.
By induction hypothesis, the leading term of $\Phi(w')$ is $\sgn(w')e_{11}T^{2(c-1)}$, if $E(w')=X$;
$\sgn(w')e_{12}T^{2(c-1)}$, if $E(w')$ is either $X\inv$ or $Y\inv$;
and $\sgn(w')(e_{11}+e_{12})T^{2(c-1)}$, if $E(w')=Y$.
Moreover, $w_1\in \{X,Y,X\inv Y, Y\inv X\}$.
This gives rise to  twelve cases depending on the end of $w'$ and the value of $w_1$.
We have to multiply the leading terms of $\Phi(w_1)$ and $\Phi(w')$ and
check that the resulting product agrees with the value in the previous table.
Table \ref{Tabla1} summarizes these products.
The second row represents the leading terms of $\Phi(w')$ according to the induction hypothesis and
the second column display the leading terms of $\Phi(w_1)$.
The resulting product is displayed in the standard way. 
\begin{table}[h!]
	$$\matriz{{|c|l||l|l|l|}
		\hline
		& E(w') & X & X\inv \text{ or } Y\inv & Y \\\hline
		w_1 & & T^{2c-2}\; \sgn(w')\; e_{11} & T^{2c-2}\; \sgn(w')\; e_{12} & T^{2c-2}\; \sgn(w') \\
		& &                 &                 & (e_{11}+e_{12}) \\\hline \hline
		X & T^{2}\; \sgn(w_1)\; e_{11}  &  T^{2c}\; \sgn(w)\;  e_{11} & T^{2c}\; \sgn(w)\;  e_{12} & T^{2c}\; \sgn(w)\\
		& &                                &                                 &  (e_{11}+e_{12})\\\hline
		Y & T^{2}\; \sgn(w_1) & T^{2c}\; \sgn(w)\; e_{11} & T^{2c}\; \sgn(w)\;  e_{12} & T^{2c}\; \sgn(w)\\
		& (e_{11}+e_{12})&                                &                                 &  (e_{11}+e_{12}) \\\hline
		X\inv Y & T^{2}\; \sgn(w_1) &  T^{2c}\; \sgn(w)\; e_{21} & T^{2c}\; \sgn(w)\; e_{22} &T^{2c}\; \sgn(w)\\
		& (e_{21}+e_{22})&                                &                                 &  (e_{21}+e_{22}) \\\hline
		Y\inv X & T^{2}\; \sgn(w_1)& T^{2c}\; \sgn(w) & T^{2c}\;\sgn(w) & T^{2c}\; \sgn(w) \\
		& (e_{21}-e_{11})& (e_{21}-e_{11}) &  (e_{22}-e_{12}) &(e_{21}+e_{22}-e_{11}-e_{12})
		\\\hline
	}$$
	\caption{\label{Tabla1} $B(w')\in \{X,Y\}$}
\end{table}

Assume now that $B(w')=X\inv$.
In this case the induction hypothesis is that the leading term of $\Phi(w')$
is
as in the second row of Table \ref{Tabla2}. Moreover, $w_1\in \{X\inv,Y,Y\inv,X\inv Y\}$.
Now Table \ref{Tabla2} consider the different options.
\begin{table}[h!]
	$$\matriz{{|c|l||l|l|l|}\hline
		& E(w') & X & X\inv \text{ or } Y\inv & Y \\\hline
		w_1&& T^{2c-2} \; \sgn(w') \; e_{21} & T^{2c-2} \; \sgn(w') \; e_{22} & T^{2c-2} \; \sgn(w') \\
		&&                               &                               & (e_{21}+e_{22}) \\\hline\hline
		X\inv& T^2 \; \sgn(w_1) e_{22} & T^{2c} \; \sgn(w)  e_{21} & T^{2c} \; \sgn(w) e_{22} & T^{2c} \; \sgn(w) \\
		&&                               &                               & (e_{21}+e_{22})\\\hline
		Y  & \sgn(w_1)  & T^{2c} \; \sgn(w) e_{11} & T^{2c} \; \sgn(w) e_{12} & T^{2c} \; \sgn(w) \\
		&(e_{11}+e_{12})&                               &                               & (e_{11}+e_{12}) \\\hline
		Y\inv & T^2 \; \sgn(w_1) & T^{2c} \; \sgn(w)  & T^{2c} \; \sgn(w)  & T^{2c} \; \sgn(w)\\
		& (e_{22}-e_{12})    & (e_{21} -e_{11})     &(e_{22} -e_{12})   &(e_{21}+e_{22} -e_{11}-e_{12})\\\hline
		X\inv Y & T^2 \; \sgn(w_1)  & T^{2c} \; \sgn(w)  e_{21} & T^{2c} \; \sgn(w) e_{22} & T^{2c} \; \sgn(w)\\
		& (e_{21}+ e_{22})    &                             &                            & (e_{21}+e_{22}) \\\hline
	}$$
	\caption{\label{Tabla2} $B(w')=X^{-1}$.}
\end{table}

Finally, suppose that $B(w')=Y\inv$.
In this case Table \ref{Tabla3} displays the different options for $E(w')$ and $w_1$.
In all the previous cases we have $\sgn(w)=\sgn(w_1)\sgn(w')$, by (\ref{SignoInduccion}). However, in the present case this only holds for $w_1\in
\{X\inv,Y\inv\}$.
Otherwise $\sgn(w)=-\sgn(w_1)\sgn(w')$.
\begin{table}[h!]
	$$\matriz{{|c|l||l|l|l|}\hline
		& E(w') & X & X\inv \text{ or } Y\inv & Y \\\hline
		w_1 & & T^{2c-2} \; \sgn(w') & T^{2c-2} \; \sgn(w') & T^{2c-2} \; \sgn(w')\\
		& &(e_{21} -e_{11}) &(e_{22} -e_{12}) &(e_{21}+e_{22} -e_{11}-e_{12}) \\ \hline \hline
		X & T^2 \; \sgn(w_1)e_{11} & T^{2c} \; \sgn(w)  e_{11} & T^{2c} \; \sgn(w) e_{12} & T^{2c} \; \sgn(w) (e_{11}+e_{12})\\\hline
		X\inv & T^2 \; \sgn(w_1)e_{22} & T^{2c} \; \sgn(w) e_{21} & T^{2c} \; \sgn(w) e_{22} & T^{2c} \; \sgn(w) (e_{21}+e_{22})  \\\hline
		Y\inv & T^2 \; \sgn(w_1) & T^{2c} \; \sgn(w)  & T^{2c} \; \sgn(w)  & T^{2c} \; \sgn(w) \\
		&(e_{22} -e_{12}) & (e_{21} -e_{11}) & (e_{22} -e_{12})  & (e_{21}+e_{22} -e_{11}-e_{12}) \\\hline
		Y\inv X & T^2 \; \sgn(w_1) & T^{2c} \; \sgn(w)  & T^{2c} \; \sgn(w)  & T^{2c} \; \sgn(w) \\
		&(e_{21}-e_{11}) & (e_{21}-e_{11}) & (e_{22} -e_{12})  & (e_{21}+e_{22} -e_{11}-e_{12}) \\\hline
	}$$
	\caption{\label{Tabla3} $B(w')=Y^{-1}$.}
\end{table}
This finishes the proof of the claim.
\end{proof}

As a consequence of Proposition~\ref{ThExamples} we obtain at once the following corollary, which in turns will be the main ingredient for the proof of Proposition~\ref{Support4}.

\begin{corollary}\label{onemaxcum}
	Let $f\in K\GEN{X^{\pm 1},Y^{\pm 1}}$ and suppose that the support of $f$ contains only one element $w$ with $C(w)=C(f)$.
	Then $f$ is not a LPI of $\U(\FC)$.
\end{corollary}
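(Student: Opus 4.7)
The plan is to derive Corollary~\ref{onemaxcum} directly from Proposition~\ref{ThExamples} by contrapositive. Suppose $w$ is the unique element of $\Supp(f)$ with $C(w)=C(f)$, set $c = C(f)$, and let $(b_0,e_0)=(B(w),E(w))$. By the very definition of $f_{b,e}$, the only contribution to $f_{b_0,e_0}$ comes from $w$, so $f_{b_0,e_0}=\sgn(w)f_w$, which is nonzero because $f_w\ne 0$ and $\sgn(w)=\pm 1$. For every other pair $(b,e)\ne (b_0,e_0)$, the sum defining $f_{b,e}$ is empty and hence $f_{b,e}=0$. Thus the four expressions $f_1,f_2,f_3,f_4$ collapse to single terms: each $f_i$ is either $0$ or $\pm\sgn(w)f_w$.

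The heart of the proof is then to verify that, whatever the value of $(b_0,e_0)\in\{X,X\inv,Y,Y\inv\}^2$, at least one of $f_1,f_2,f_3,f_4$ picks up the nonzero term $f_{b_0,e_0}$ with coefficient $\pm 1$. I would do this by a direct case inspection on the sixteen possible pairs $(b_0,e_0)$, cross-referencing against the list of $(b,e)$-indices that appear in each $f_i$. For convenience one can organize the check in a small table: the pairs with $b_0=X$ are covered by $f_1$ or $f_2$; those with $b_0=X\inv$ by $f_3$ or $f_4$; those with $b_0=Y$ by $f_1$ or $f_2$; and those with $b_0=Y\inv$ by some combination of $f_1,f_2,f_3,f_4$, a pair like $(Y\inv,Y)$ actually appearing in all four.

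Once the case analysis is complete, the conclusion is immediate: there exists $i\in\{1,2,3,4\}$ with $f_i = \pm\sgn(w)f_w\ne 0$, so by Proposition~\ref{ThExamples} the Laurent polynomial $f$ cannot be a LPI of $\U(\FC)$.

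The only potential obstacle is making sure no pair $(b_0,e_0)$ is missed by all four expressions $f_1,\dots,f_4$; conceivably the defining formulas could have omitted some corner of the sixteen-element grid. A quick scan shows that the pairs $(X\inv,X)$, $(X\inv,X\inv)$, $(X\inv,Y\inv)$ (covered only by $f_3$ or $f_4$) and the pairs $(X,X\inv)$, $(X,Y\inv)$, $(Y,X\inv)$, $(Y,Y\inv)$ (covered only by $f_2$) are the ones most at risk, but each genuinely appears in the stated formulas, and no cancellation can occur since only one $f_{b,e}$ is nonzero. With that verification, the corollary follows.
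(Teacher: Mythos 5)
Your argument is correct and is precisely the intended derivation: the paper offers no written proof (the corollary is said to follow ``at once'' from Proposition~\ref{ThExamples}), and the real content is exactly your observation that each of the sixteen pairs $(b,e)$ occurs with coefficient $\pm 1$ in at least one of $f_1,\dots,f_4$, so the unique surviving term $f_{B(w),E(w)}=\sgn(w)f_w\neq 0$ forces some $f_i\neq 0$. The only case your grid omits is $w=1$ (i.e.\ $C(f)=0$, where $B(w)=E(w)=1$ and all $f_i$ vanish), but there $f$ is a nonzero constant and trivially not a LPI of $\U(\FC)$.
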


We finish with the

\begin{proofof}\textbf{Proposition~\ref{Support4}}.
We argue by contradiction.
So let $f$ be a non-zero LPI of $\U(\FC)$ with less than four elements in the support.
As $\U(\FC)$ contains a non-abelian free group, it does not satisfy a group identity and hence $\U(\FC)$ does not satisfy a LPI with support of cardinality smaller than three.
Thus the support of $f$ has exactly three elements.
One may assume without loss of generality that $1$ belongs to the support of $f$.
As any free group is contained in a free group of rank $2$ we may assume also that $f\in K\GEN{X^{\pm 1},Y^{\pm 1}}$.
By Corollary~\ref{onemaxcum}, the support of $f$ has two different elements with the same cumulus as $f$.
Thus, one may assume that $f=1+\alpha_1w_1+\alpha_1w_2$ with $w_1$ and $w_2$ different non-trivial monomials with the same cumulus and $\alpha_1,\alpha_2\in K\setminus \{0\}$.
We may assume that the common cumulus, say $c$, of $w_1$ and $w_2$ is minimal.
Then $w_1^{-1}f=\alpha_1+w_1^{-1}+\alpha_2w_1^{-1}w_2$ and $fw_1^{-1}=\alpha_1+w_1^{-1}+\alpha_2w_2w_1^{-1}$ are LPI identities of $\U(\FC)$ too.
By Lemma~\ref{CumulusSignPropiedades}.\eqref{CumulusInverse} and Corollary~\ref{onemaxcum}, we have
	\begin{equation}\label{CumulusIguales}
	c=C(w_1)=C(w_2)=C(w_1^{-1}w_2)=C(w_2w_1^{-1}).	
\end{equation}
This implies that $c>1$.
Using the unique decomposition of $w_1$ (respectively, $w_2$) in product of $C(w_1)$ (respectively, $C(w_2)$) elements of cumulus 1 we obtain unique factorizations
$w_1=w_ba_1\cdots a_kw_e$ and $w_2=w_bb_1\dots b_k w_e$ where $a_1\ne b_1$, $a_k\ne b_k$, each $a_i$ and $b_i$ has cumulus 1, and $k=c-C(w_b)-C(w_e)>0$.

We claim that $c$ is either $2(C(w_e)+k)$ or $2(C(w_e)+k)-1$.
Suppose that $c \ne 2(C(w_e)+k)$.
As $c=C(w_1^{-1}w_2)$, we have $B(a_1)=B(b_1)$ and as $a_1\ne b_1$ this implies that $\{a_1,b_1\}$ is either $\{X^{-1},X^{-1}Y\}$ or $\{Y^{-1},Y^{-1}X\}$.
By symmetry one may assume that $a_1=X^{-1}$ and $b_1=X^{-1}Y$.
If $a_2\cdots a_kw_e=1$ then $w_1^{-1}w_2=Y$ and hence $c=C(w_1^{-1}w_2)=1$, a contradiction.
Therefore, $a_2\cdots a_kw_e\ne 1$, $E(w_e^{-1}a_k^{-1}  \cdots a_2^{-1})\not\in \{X^{-1},Y^{-1}\}$ and $B(b_2\cdots b_kw_e)\ne Y^{-1}$.
Hence
	$$c=C(w_1^{-1}w_2)=C(w_e^{-1}a_k^{-1}  \cdots a_2^{-1}Yb_{2}\cdots b_k w_e)=2(C(w_e)+k)-1,$$
as desired.

As $c=C(w_2w_1^{-1})$, the same proof of the previous paragraph shows that $c$ is either $2(C(w_b)+k)$ or $2(C(w_b)+k)-1$. Thus $C(w_e)=C(w_b)$ and  $2C(w_e)+k=c\in \{2(C(w_e)+k),2(C(w_e)+k)-1\}$. As $k\ne 0$ we deduce that $k=1$ and  $2C(w_e)+1=c=C(w_1^{-1}w_2)=C(w_e^{-1}a_1^{-1}b_1w_e)$.
Therefore $C(a_1^{-1}b_1)=1$.
Moreover $\alpha_1^{-1}a_1^{-1}w_b^{-1}fw_e^{-1}= 1+\alpha_1^{-1}\alpha_2a_1^{-1}b_1+\alpha_1^{-1}a_1^{-1}w_b^{-1}w_e^{-1}$ is another LPI of $\FC$ with three elements in the support.
By Corollary~\ref{onemaxcum}, $C(a_1^{-1}w_b^{-1}w_e^{-1})=C(a_1^{-1}b_1)=1<c$.
This contradicts the minimality of $c$.
\end{proofof}

\begin{remark}
Proposition~\ref{ThExamples} provides some constrains for the LPIs of $\U(\FC)$.
There are several obvious ways to obtain other constrains.
For example, if  $f=f(X,Y)$ is a LPI of $\FC$ then so are $g=f(Y,X)$ and $h=f(X^{-1},Y)$. Applying Proposition~\ref{ThExamples} to these Laurent polynomials we obtain that the expressions obtained by interchanging the roles of $X$ and $Y$ or interchanging the roles of $X$ and $X^{-1}$  in each $f_i$ should vanish.
Furthermore, the proof of Proposition~\ref{ThExamples} can be easily adapted to obtain more constrains for the LPIs of $\U(\FC)$.
More precisely, take any two units $u$ and $v$ of $\FC$ and let $\Phi$ be the homomorphism $\Phi:KF\rightarrow M_2(K[T])$ given by $\Phi(X)=\varphi(u)$ and $\Phi(Y)=\varphi(v)$.
An appropriate election of $u$ and $v$ implies a bound on the degree of $\Phi(f)$ in terms of some number depending on $f$.
For example, if $u$ and $v$ are as in \eqref{uv} then the degree of $\Phi(f)$ is at most twice the cumulus of $f$.
Another possible election for $u$ and $v$ is as follows:
	$$u=(1+\beta)(1+\alpha\beta\alpha)\qand v=(1+(1-\alpha)\beta(1+\alpha))(1+(1+\alpha)\beta(1-\alpha)).$$
In this case, if we define
$C'(w)=\sum_{i=0}^k |n_i| + \sum_{i=1}^k |m_i|$, for $w$ as in \eqref{Word}, and $C'(f)=\max\{C'(w) : w\in \Supp(f)\}$, then the degree of $\Phi(f)$ is at most $2C'(f)$.
Calculating the coefficient of $\Phi(f)$ of degree $2C'(f)$ we obtain a matrix which should vanish if $f$ is a LPI of $\U(\FC)$.
\end{remark}

\bibliographystyle{amsalpha}
\bibliography{ReferencesMSC}
\end{document}